\newtheorem{lemma}{Lemma}[section]
\newtheorem{theorem}[lemma]{Theorem}
\newtheorem{obs}[lemma]{Observation}
\newtheorem{example}[lemma]{Example}
\DeclareMathOperator{\HK}{HK}
\DeclareMathOperator{\GK}{GKdim}
\DeclareMathOperator{\supp}{supp}
\DeclareMathOperator{\HH}{H}
\newcommand{\s}{\subseteq}
\newcommand{\ar}{\xrightarrow}
\numberwithin{equation}{section}
\newenvironment{proofy}{\noindent\textit{Proof of Lemma~\ref{rq}.}}{\hfill$\square$\medskip}
\newenvironment{proofx}{\noindent\textit{Proof of Theorem~\ref{main}.}}{\hfill$\square$\medskip}
\newenvironment{proofxx}{\noindent\textit{Proof of Lemma~\ref{sprim}.}}{\hfill$\square$\medskip}
\title{Gr\"obner basis and the automaton property of Hecke--Kiselman algebras}
\author{Arkadiusz M\c{e}cel and Jan Okni\'nski}
\date{}
\begin{document}
\maketitle

\begin{abstract}
It is shown that the Hecke-Kiselman algebra associated to a finite
directed graph is an automaton algebra in the  sense of
Ufnarovskii. Consequently, its Gelfand-Kirillov dimension is an
integer if it is finite. This answers a question stated in~\cite{mecel_okninski}. As a consequence, it is proved that the
Hecke-Kiselman algebra associated to an oriented cycle admits a
finite Gr\"obner basis.\\

\noindent\textbf{2010 Mathematics Subject Classification}: 16S15, 16S36, 20M05, 20M35.\\

\noindent\textbf{Key words}: Hecke-Kiselman algebra, monoid, simple graph, automaton algebra, Gr\"obner basis.

\end{abstract}

\section{Introduction}

In the paper~\cite{maz} of Ganyushkin and Mazorchuk a finitely generated monoid $\HK_{\Theta}$
was defined for an arbitrary finite simple digraph $\Theta$ with $n$
vertices $\{1, \ldots, n\}$ by
specifying generators and relations.
\begin{itemize}
    \item[(i)] $\HK_{\Theta}$ is generated by idempotents $ x_i^2 = x_i$,
    where $1 \leq i \leq n$,
    \item[(ii)] if the vertices $i$, $j$ are not connected in  $\Theta$,
    then  $x_ix_j = x_jx_i$,
    \item[(iii)] if $i$, $j$ are connected by an arrow $i \to j$ in $\Theta$,
    then $x_ix_jx_i = x_jx_ix_j = x_ix_j$,
        \item[(iv)] if $i$, $j$ are connected by an (unoriented) edge in $\Theta$,
        then $x_ix_jx_i = x_jx_ix_j$.
\end{itemize}

\noindent If the graph $\Theta$ is unoriented (has no arrows), the
monoid  $\HK_{\Theta}$ is isomorphic to the so-called $0$-Hecke
monoid $\HH_0(W)$, where $W$ is the Coxeter group of the graph
$\Theta$, see~\cite{den}. The latter monoid plays an important
role in representation theory. In the case $\Theta$ is oriented (all
edges are arrows) and acyclic, the monoid $\HK_{\Theta}$ is finite
and it is a homomorphic image of the so-called Kiselman monoid
$K_n$, see~\cite{maz}, \cite{kun}. It is  worth mentioning that a characterization of general finite digraphs $\Theta$ such that the monoid $\HK_{\Theta}$ is finite remains an open problem, see \cite{aragona}.\\

\noindent The aim of this paper is to continue the study of the
semigroup algebra $A = k[\HK_{\Theta}]$ over a field $k$, in the
case when $\Theta$ is an oriented graph, that was started in~\cite{mecel_okninski}, where it was shown that the growth of $A$
is either polynomial or the monoid $\HK_{\Theta}$ contains a
noncommutative free submonoid. The main result of the present
paper states that the algebra $A$ is automaton in the sense of
Ufnarovskii~\cite{ufnar}, which means that the set of normal words
of $A$ forms a regular language. In other words, the set of normal
words of $A$ is determined by a finite automaton.

\begin{theorem}\label{main} Assume that $\Theta$ is a finite simple oriented graph. Then
$A = k[\HK_{\Theta}]$  is an automaton algebra, with respect
to any deg-lex order on the underlying free monoid of rank $n$.
Consequently, the Gelfand-Kirillov dimension $\GK(A)$ of
 $A$ is an integer if it is finite.
\end{theorem}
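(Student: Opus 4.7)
The plan is to establish that the set $\mathcal{N}$ of normal (reduced) monomials of $A$, viewed inside the free monoid $F = \langle x_1, \ldots, x_n \rangle$ and with respect to the chosen deg-lex order, forms a regular language; by Ufnarovskii's characterization this is exactly the automaton property, and the integrality of $\GK(A)$ (when finite) then follows at once from the general structural theorem for automaton algebras.

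I would begin by recording the initial leading monomials produced by the defining relations: from (i), the squares $x_i^2$; from (ii), for each non-edge $\{i,j\}$, the lex-larger of $x_ix_j$, $x_jx_i$; and from (iii), for each arrow $i \to j$ in $\Theta$, the two length-three monomials $x_jx_ix_j$ and $x_ix_jx_i$. Avoiding just these is already a regular condition, but overlaps between relations of type (iii) along paths and, crucially, along oriented cycles of $\Theta$ typically force additional elements into the reduced Gr\"obner basis. The technical heart of the argument consists in controlling this completion.

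To do so, I would leverage the growth dichotomy from \cite{mecel_okninski}: either $A$ has polynomial growth, or $\HK_\Theta$ contains a noncommutative free submonoid. Combined with a careful combinatorial description of normal forms in $\HK_\Theta$, I would aim to prove that every leading monomial of the reduced Gr\"obner basis is captured by finitely many local patterns attached to arrows and cycles of $\Theta$, together with a finite family of periodic extensions along each oriented cycle. Such a description translates into a finite automaton whose states encode only a bounded amount of local information --- for instance, the last few letters read together with a marker indicating which cycle the current suffix is traversing --- and whose transitions check whether appending the next letter would complete one of the forbidden patterns.

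The main obstacle is precisely the case in which $\Theta$ contains oriented cycles, since then $\HK_\Theta$ is infinite and the overlap reductions in the Gr\"obner basis can produce leading monomials of unbounded length. The key claim to be verified is that, although the reduced Gr\"obner basis may itself be infinite, the set of its leading monomials is nonetheless regular, because all sufficiently long such monomials exhibit a periodic structure dictated by a cycle of $\Theta$ with boundary data lying in a finite set. Once this is in place, a finite automaton recognising $F \setminus \mathcal{N}$ can be built explicitly, yielding the automaton property; the integrality statement for $\GK(A)$ is then an immediate consequence of Ufnarovskii's theorem.
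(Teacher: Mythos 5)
Your high-level strategy is the right one --- and essentially the only one available: exhibit the set of leading monomials of a Gr\"obner basis of $k[\HK_\Theta]$, show that this set is regular, and invoke Ufnarovskii's theorems for the automaton property and for the integrality of $\GK(A)$. But the proposal stops exactly where the work begins. The entire content of the theorem is the explicit description of the obstruction set and the verification that it really is the set of leading monomials of a Gr\"obner basis, and neither is supplied. Your ``key claim'' --- that all sufficiently long leading monomials of the reduced Gr\"obner basis exhibit a periodic structure dictated by an oriented cycle of $\Theta$, with boundary data in a finite set --- is asserted, not proved, and it is moreover not an accurate guess at what the completed basis looks like. In the paper the Gr\"obner basis (not reduced, but that is harmless) has leading terms of three very simple shapes: $twt$ with $w$ an \emph{arbitrary} word over the subalphabet $\{z : z \nrightarrow t\}$ (resp.\ $\{z: t\nrightarrow z\}$), and $t_1 w t_2$ with $t_1>t_2$, $t_2\nleftrightarrow t_1$ and $w$ an arbitrary word over $\{y: y\nleftrightarrow t_2\}$. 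These sets are of the form $t\langle Y_t\rangle t$ and $z\langle X_x\rangle x$, hence trivially regular; no periodicity along cycles and no bounded-memory suffix automaton is involved. The hard part, which your sketch does not engage with, is proving that this family of rewriting rules is actually confluent: the paper does this by listing and resolving all overlap and inclusion ambiguities between the three rule types (some thirty-five families of cases) via Bergman's diamond lemma.

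Two further points. First, the growth dichotomy from the cited earlier paper plays no role in the proof and cannot plausibly be made to carry the argument: the automaton property is claimed for \emph{all} oriented $\Theta$, including those where $\HK_\Theta$ contains a free submonoid, so a case split on polynomial versus exponential growth buys nothing. Second, be careful with the phrase ``reduced Gr\"obner basis'': a completion procedure starting from the defining relations need not terminate (indeed, Section 4 of the paper shows that even for a PI example the algebra has no finite Gr\"obner basis for the given order), so one must write down an infinite but structurally transparent basis in advance rather than hope to control the output of completion. As it stands, the proposal is a plan for a proof rather than a proof; the missing ingredient is the explicit reduction system together with the confluence check.
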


\noindent In the case when the digraph $\Theta$ is unoriented, the
corresponding monoid algebra is known to be automaton. Indeed, as
mentioned above: in this case $\HK_{\Theta} = \HH_0(W)$, where $W$
is the Coxeter group of the graph $\Theta$. In fact, one can prove
that the reduced words for $W$ and $\HH_0(W)$ are the same, and two
words represent the same element of the Coxeter group if and only
if they represent the same element of the Coxeter monoid, see~\cite{tsa}. However, the set of normal forms of elements of a
Coxeter group is known to be regular,
see~\cite{bri}.\\

\noindent We note that it was proved in~\cite{mecel_okninski}
that the following conditions are equivalent: 1) $k[\HK_{\Theta}]$
is a PI-algebra, 2) $\HK_{\Theta}$ does not contain a
noncommutative free submonoid, 3) $\GK( k[\HK_{\Theta}])$ is
finite, 4) $\Theta$ does not contain two different oriented cycles
connected by an oriented path. Theorem~\ref{main} answers a
question raised in~\cite{mecel_okninski}.\\

\noindent The key method used to obtain this result is the description of a Gr\"obner basis of Hecke-Kiselman algebras.
It is known that if the leading terms of the elements of this basis form a regular subset of the corresponding free monoid, then the algebra is automaton, see~\cite{ufnar}, Theorem~2 on page 97.  Consequently, our methods involve the monoid $\HK_{\Theta}$ only, rather than certain ring theoretical aspects of the algebra $k[\HK_{\Theta}]$. The obtained Gr\"obner basis is crucial for the approach to the structure of such algebras, which will be pursued in a forthcoming paper. \\

\noindent The class of automaton algebras was introduced by
Ufnarovskii in~\cite{ufn}. The main motivation was to study a
class of finitely generated algebras that generalizes the class of
algebras that admit a finite Gr\"obner basis with respect to some
choice of generators and an ordering on monomials. The difficulty
here lies in the fact that there are infinitely many generating
sets as well as infinitely many admissible orderings on monomials
to deal with. There are examples of algebras with finite Gr\"obner
bases with respect to one ordering, and infinite bases with
respect to the other. Up until recently it was not known whether
for any of known examples of automaton algebras with infinite
Gr\"obner bases with respect to certain orderings one could find a
better ordering that would yield a finite Gr\"obner basis.
First counterexamples were found by Iyudu and Shkarin in~\cite{iyu}.\\

\noindent There are many results indicating that the class of
automaton algebras not only has better computational properties
but also several structural properties that are better than in
the class of arbitrary finitely generated algebras. For example,
in this context one can refer to results on the Gelfand-Kirillov
dimension, results on the radical in the case of monomial
automaton algebras~\cite{ufn}, results on prime algebras of this
type~\cite{bell}, and also structural results concerned with the
special case of finitely presented monomial algebras~\cite{okn}.
In particular, finitely generated algebras of the following types
are automaton: commutative algebras, algebras defined by not more
than two quadratic relations, algebras for which all the defining
relations have the form $[x_ix_j] = 0$, for some pairs of
generators, see~\cite{ufnar}. Moreover, algebras that are finite
modules over commutative finitely generated subalgebras are also of this type~\cite{cedo_okninski}. Several aspects of automaton algebras have
been recently studied also in~\cite{iyu},  \cite{piontkovski1}, \cite{piontkovski2}. \\

\noindent In Section 2 we introduce the necessary definitions and
auxiliary results. Next, in Section 3, we determine a Gr\"obner
basis of $k[\HK_{\Theta}]$, from which the main result follows.
Finally, in Section 4, we prove that in the case when the graph
$\Theta$ is a cycle, $k[\HK_{\Theta}]$ has a finite  Gr\"obner
basis. An example is given to show that this is not true for
arbitrary Hecke-Kiselman algebras of oriented graphs, even in the
case when  the algebra satisfies a polynomial identity.

\section{Definitions and the necessary background}\label{dwa}

\noindent Let $F$ denote the free monoid on the set $X$ of $n\ge
3$ free generators $x_1,\dotsc,x_n$. Let $k$ be a field and let
$k[F] =k\langle x_1,\dotsc,x_n\rangle$ denote the corresponding
free algebra over $k$. Assume that a well order $<$ is fixed on
$X$ and consider the induced degree-lexicographical order on $F$
(also denoted by $<$). Let $A$ be a finitely
generated algebra over $k$ with a set of generators $r_1, \ldots,
r_n$ and let $\pi: k[F] \to A$ be the natural homomorphism of
$k$-algebras with $\pi(x_i) = r_i$. We will assume that $\ker
(\pi)$ is spanned by elements of the form $w-v$, where $w,v\in F$
 (in other words, $A$ is a semigroup algebra). Let $I$ be the
ideal of $F$ consisting of all leading monomials of $\ker(\pi)$.
The set of normal words corresponding to the chosen
presentation for $A$ and to the chosen order on $F$ is defined by
$N(A) = F \setminus I$.
One says that $A$ is an
automaton algebra if $N(A)$ is a regular language. That means that
this set is obtained from a finite subset of $F$ by applying a
finite sequence of operations of union, multiplication and
operation $*$ defined by $T^* = \bigcup_{i \geq 1}T^i$,
for $T \subseteq F$. If $T = \{w\}$ for some $w \in F$, then we write $T^* = w^*$.\\

\noindent For every $x \in X$ and $w \in F$ by $|w|_x$ we mean the
number of occurrences of $x$ in $w$. By $|w|$ we denote the length
of the word  $w$. The support of the word $w$, denoted by
$\supp(w)$, stands for the set of all $x \in X$ such that $|w|_x >
0$. We say that the word $w = x_1\cdots x_r \in F$ is a subword of
the word $v \in F$, where $x_i \in X$, if $ v = v_1x_1\cdots v_r
x_r v_{r+1}$, for some $v_1, \ldots, v_{r+1} \in F$. If $v_2,
\ldots, v_r$ are trivial words, then we say that $w$
is a factor of $v$. \\

\noindent Describing the normal words of a finitely generated
algebra $A$ is related to finding a Gr\"obner basis of the ideal $J=\ker(\pi)$.
 Recall that a subset $G$ of $J$ is called a Gr\"obner basis of
$J$ (or of $A$) if $0\notin G$, $J$ is generated by $G$ as an ideal and
for every nonzero $f\in J$ there exists $g\in G$ such that the
leading monomial $\overline{g}\in F$ of $g$ is a factor of the
leading monomial $\overline{f}$ of $f$.
If $G$ is a Gr\"obner basis of $A$, then a word $w\in F$ is normal
if and only if $w$ has no factors that are leading
monomials in $g\in G$. \\

\noindent The so-called diamond lemma is often used in this
context. We will follow the approach and terminology of~\cite{ber}. By a reduction in $k[F]$ determined by a pair
$(w,w')\in F^2$, where $w' < w$ (the deg-lex order of $F$), we
mean any operation of replacing a factor $w$ in a word $f \in F$
by the factor $w'$. For a set $T \subseteq F^2$ of such pairs
(these pairs will be called reductions as well) we say that the
word $f \in F$ is $T$-reduced if no factor of $f$ is the leading
term $w$ of a reduction $(w, w')$ from the set $T$. The deg-lex
order on $F$ satisfies the descending chain condition, which means
there is no infinite decreasing chain of elements in $F$. This
means that a $T$-reduced form of a word $w \in F$ can always
be obtained in a finite series of steps. The linear space
 spanned by $T$-reduced monomials in $k[F]$ is denoted by $R(T)$.\\

\noindent The diamond lemma gives necessary and sufficient conditions for the set $N(A)$ of normal words to coincide with the set of $T$-reduced words in $F$. The key tool is the
notion of ambiguity. Let $\sigma = (w_{\sigma}, v_{\sigma})$,
$\tau = (w_{\tau}, v_{\tau})$ be reductions in $T$. By an overlap
ambiguity we mean a quintuple $(\sigma, \tau, l, w, r)$, where $1
\neq l, w, r \in F$ are such that $w_{\sigma} = wr$ and $w_{\tau}
= lw$. A quintuple $(\sigma, \tau, l, w, r)$ is
called an inclusive ambiguity if $w_{\sigma} = w$ and $w_{\tau} =
lwr$. For brevity we will denote these ambiguities as $l(wr)
= (lw)r$ and $l(w)r = (lwr)$, respectively. We will also say that
they are of type $\sigma$-$\tau$. We say that the overlap
(inclusive, respectively) ambiguity is resolvable if
$v_{\tau}r$ and $lv_{\sigma}$ ($v_{\tau}$ and
$lv_{\sigma}r$, respectively) have equal $T$-reduced forms. We use the
following simplified version of Bergman's diamond lemma.

\begin{lemma}\label{diamond} Let $T$ be a reduction set in the
free algebra $k[F]$ over a field
$k$, with a fixed deg-lex order in the free monoid $F$ over
$X$. Then the following conditions are equivalent:
\begin{itemize}
\item all ambiguities on $T$ are resolvable,
\item each monomial $f \in F$ can be uniquely $T$-reduced,
\item if $I(T)$ denotes the ideal of $k[F]$ generated by $\{ w - v: (w, v) \in T \}$
then $k[F]=I(T) \oplus R(T)$ as vector spaces.
\end{itemize}
Moreover if the conditions above are satisfied then the
$k$-algebra $A = k[F]/I(T)$ can be identified with $R(T)$ equipped
with a $k$-algebra structure with $f \cdot g$ defined as the
$T$-reduced form of $fg$, for $f,g\in R(T)$. In this case, $\{ w - v: (w, v) \in
T \}$ is a Gr\"obner basis of $A$.
\end{lemma}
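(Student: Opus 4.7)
The plan is to follow the classical Bergman/Newman approach to rewriting systems, exploiting the specific features of $F$ and the deg-lex order. The first step is to establish \emph{termination}: since every single-step reduction $(w,w') \in T$ replaces a factor $w$ of some word $f$ by a strictly smaller word $w'$ in the deg-lex order, and since this order is well-founded on $F$, any sequence of reductions applied to a monomial must halt after finitely many steps. By linearity, the same is true for every element of $k[F]$, so a $T$-reduced form always exists.

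For the main implication (i) $\Rightarrow$ (ii) I would invoke Newman's lemma: a terminating rewriting system is confluent if and only if it is locally confluent. Local confluence amounts to the assertion that whenever a monomial $f$ admits two distinct one-step reductions, the resulting elements can be further reduced to a common value. If the two reductions act on disjoint factors of $f$, confluence is immediate by performing the two steps in either order. If their positions overlap or one contains the other, the configuration is exactly that of an overlap or an inclusive ambiguity as defined in the excerpt, and the hypothesis of resolvability supplies the common reduct. Combined with termination this yields confluence and hence uniqueness of the fully $T$-reduced form of every word, giving (ii).

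For (ii) $\Rightarrow$ (iii), define the linear map $\rho : k[F] \to R(T)$ sending each monomial to its unique $T$-reduced form. Since $w - v \in \ker(\rho)$ for every $(w,v) \in T$, one has $I(T) \subseteq \ker(\rho)$; on the other hand $f - \rho(f) \in I(T)$ by construction, so $k[F] = I(T) + R(T)$. The intersection $I(T) \cap R(T)$ is zero because any element of it equals its own reduced form and, as an element of $I(T)$, must reduce to $0$ by uniqueness. For (iii) $\Rightarrow$ (i), in any ambiguity the two words $v_{\tau}r$ and $l v_{\sigma}$ (respectively $v_{\tau}$ and $l v_{\sigma} r$) differ by an element of $I(T)$, so their images in $R(T)$ coincide, meaning their $T$-reduced forms are equal. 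The ``Moreover'' assertion is then immediate: the decomposition in (iii) identifies $A = k[F]/I(T)$ with $R(T)$, the multiplication is transported so that $f \cdot g$ is the $T$-reduced form of $fg$, normal words coincide with $T$-reduced words, and the Gr\"obner basis assertion follows because for any nonzero $f \in \ker(\pi)$ the leading monomial $\overline{f}$ fails to be $T$-reduced and therefore contains the leading monomial $w$ of some $(w,v) \in T$ as a factor.

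The step requiring most care is the reduction of local confluence to the two listed families of ambiguities: one has to check that any pair of one-step reductions applied to a common monomial either targets disjoint factors, which is handled trivially, or else produces a configuration matching the overlap or inclusive pattern of the excerpt. Once this case analysis is in place, the remaining equivalences and the concluding statements are essentially formal.
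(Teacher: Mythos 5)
The paper offers no proof of this lemma at all: it is presented as a simplified form of Bergman's diamond lemma and justified by the citation to \cite{ber}, so there is nothing internal to compare against. Your outline is a correct reconstruction of the standard argument, and it is legitimately simpler here than in Bergman's general setting because every relation is a difference of two monomials: each one-step reduction sends a monomial to a single monomial, so $T$ is a genuine string rewriting system on $F$ and termination plus Newman's lemma apply directly (for general reductions, where a monomial is replaced by a linear combination, Bergman needs a more careful induction along the well-order instead of plain local confluence). The remaining ingredients you list --- the trichotomy disjoint/overlap/inclusion for two occurrences of leading words in a common monomial (which is exactly what the two ambiguity types encode, including the degenerate inclusion with $l=r=1$ when two distinct reductions share the same leading word), the splitting $k[F]=I(T)\oplus R(T)$ via the reduction map $\rho$, and the Gr\"obner basis property from the observation that the leading monomial of a nonzero element of $I(T)$ cannot itself be $T$-reduced since no smaller monomial can reduce onto it --- are all as in the standard treatment, so the proposal is sound.
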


\section{Gr\"obner basis in the oriented graphs case}

\noindent In this section we will prove that for any oriented graph $\Theta = (V(\Theta), E(\Theta))$,
the language of normal words of the Hecke-Kiselman algebra $k[\HK_{\Theta}]$
is regular, and thus that the algebra is always automaton.\\

\noindent For $t \in V(\Theta)$ and $w \in F = \langle V(\Theta)\rangle$ we write $w
\nrightarrow t$ if $|w|_t = 0$ and there are no $x \in \supp(w)$
such that $x \rightarrow t$ in $\Theta$. Similarly, we define $t
\nrightarrow w$: again we assume that $|w|_t = 0$ and there is no
arrow $t \to y$, where $y \in \supp(w)$. In the case when $t \nrightarrow w$ and
$w \nrightarrow t$, we write $t \nleftrightarrow w$.

\begin{theorem}\label{basis} Let $\Theta$ be a finite simple oriented graph with
vertices $V(\Theta) = \{x_1, x_2, \ldots, x_n\}$. Extend the natural ordering $x_1 < x_2 < \cdots < x_n$
on the set $V(\Theta) $ to the deg-lex order on the free monoid $F = \langle V(\Theta)  \rangle$.
Consider the following set $T$ of reductions on the algebra $k[F]$:
\begin{itemize}
\item[(i)] $(twt, tw)$, for any $t \in V(\Theta)$ and $w \in F$ such that  $w \nrightarrow t$,
\item[(ii)] $(twt, wt)$, for any $t \in V(\Theta)$ and $w \in F$ such that  $t \nrightarrow w$,
\item[(iii)] $(t_1wt_2, t_2t_1w)$, for any $t_1, t_2 \in V(\Theta)$ and $w \in F$
   such that $t_1 > t_2$ and $t_2 \nleftrightarrow t_1w$.
\end{itemize}
\noindent Then the set $\{w - v, \text{ where } (w, v) \in T\}$
forms a Gr\"obner basis of the algebra $k[\HK_{\Theta}]$.
\end{theorem}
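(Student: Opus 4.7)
The plan is to apply Bergman's diamond lemma (Lemma~\ref{diamond}) to the reduction set $T$. Three things must be verified: (a) each reduction $(w,v) \in T$ corresponds to a valid identity $w = v$ in $\HK_\Theta$, so that $I(T) \subseteq \ker(\pi)$; (b) the defining relations of $\HK_\Theta$ all lie in $I(T)$, so that $I(T) = \ker(\pi)$ and the quotient $k[F]/I(T)$ is genuinely $k[\HK_\Theta]$; and (c) every overlap and inclusive ambiguity of $T$ is resolvable. Once these are in place, the lemma immediately yields the desired Gröbner basis.

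For (a), reduction (iii) is transparent: the hypothesis $t_2 \nleftrightarrow t_1 w$ says that $t_2$ is not connected in $\Theta$ to any letter of $t_1 w$, so $t_2$ commutes past the entire word, giving $t_1 w t_2 = t_2 t_1 w$. Reductions (i) and (ii) are proved by induction on $|w|$. For (i), writing $w = y w'$ with $y$ the first letter, either $y$ commutes with $t$ (one moves $t$ past $y$ and applies the induction hypothesis to $w'$), or $t \to y$, in which case the identity $ty = tyt$ coming from the defining relation for $t \to y$ allows us to write $twt = (tyt)w't = ty \cdot tw't$, after which the induction hypothesis on $w'$ gives $twt = ty \cdot tw' = tyt w' = tyw' = tw$. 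Reduction (ii) is handled symmetrically by decomposing $w = w'y$ and using the dual identity $yt = tyt$ for $y \to t$. Part (b) is then immediate: the idempotency law, the commutations between unconnected generators, and the two braid-type relations for $i \to j$ are precisely the reductions of $T$ in which $w$ is empty or a single vertex.

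The substantive step is (c). One enumerates the six combinations of reduction types $\sigma$-$\tau$ with $\sigma,\tau \in \{\text{(i),(ii),(iii)}\}$, and within each combination the possible overlap patterns $l(wr) = (lw)r$ and inclusive patterns $l(w)r = (lwr)$. Since every left-hand monomial has the shape $twt$ or $t_1 w t_2$ with distinguished framing letters, the overlap shapes are constrained: for instance, two type~(i) reductions can overlap only when they share a common central letter $t$, producing a combined word of the form $tw_1tw_2t$, while an inclusive ambiguity of (i) inside (i) requires a subword $t'w_0t'$ to sit inside the interior word $w_1$. For each configuration one computes the two competing reducts, then applies further reductions from $T$ -- legitimate thanks to (a) -- to drive both to a common $T$-reduced form; the key identities needed for this convergence are themselves instances of (i)--(iii).

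The principal obstacle is the sheer number of subcases in (c), together with the bookkeeping required to confirm that the side conditions $w\nrightarrow t$, $t\nrightarrow w$ and $t_2 \nleftrightarrow t_1w$ persist (perhaps in a modified form) after reduction. What keeps the analysis finite, despite $T$ being an infinite family parametrised by arbitrary words, is that the resolvent of any ambiguity depends only on the shape of the overlap or inclusion and on the connectivity type in $\Theta$ of the few vertices framing it, not on the specific intermediate word. Once the finite case check is completed, Lemma~\ref{diamond} identifies $\{w - v : (w,v) \in T\}$ as a Gröbner basis of $k[\HK_\Theta]$.
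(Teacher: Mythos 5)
Your overall strategy coincides with the paper's: apply Bergman's diamond lemma to the reduction system $T$ and check that all ambiguities resolve. Your parts (a) and (b) are correct and in fact supply detail that the paper compresses into one sentence (``it is easy to see that $w$ and $v$ represent the same element of $\HK_{\Theta}$''); the induction on $|w|$ using $tyt=ty$ for $t\to y$ is the right argument there. The problem is part (c), which you correctly identify as the substantive step but do not actually carry out, and your description of it already contains an error that would derail the case check. You assert that two type (i) reductions can overlap only when they share a common central letter, producing $tw_1tw_2t$. That is false: the reductions $(t_1w_1t_2w_2t_1,\,\cdot\,)$ and $(t_2w_2t_1w_3t_2,\,\cdot\,)$ with $t_1\neq t_2$ overlap in the word $t_1w_1t_2w_2t_1w_3t_2$, because the framing letter of one reduction can occur inside the interior word of the other. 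The paper's enumeration lists both shapes for each pair of types (plus the inclusive ones), arriving at $35$ distinct ambiguity configurations; an enumeration based only on shared framing letters misses roughly half of them.

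A second point your plan does not anticipate: several ambiguities cannot be resolved by simply pushing both reducts forward with reductions from $T$, because the two reducts differ by transporting a letter $t$ across a block $w$ with $t\nleftrightarrow w$, and neither word is a single reduction of the other. The paper isolates this as a separate observation (if $t\nleftrightarrow w$ then $tw$ and $wt$ have equal $T$-reduced forms, proved by induction on $|w|$) and invokes it in four of the resolutions. Finally, your remark that the resolvent depends only on ``the connectivity type of the few vertices framing'' the overlap understates the bookkeeping: in many cases one must derive a new side condition (e.g.\ that $w_2w_3\nrightarrow t_2$ follows from $w_2t_1w_3\nrightarrow t_2$) to justify the second reduction applied to one of the branches. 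None of this is an obstruction in principle --- the approach is the paper's own and it does go through --- but as written your proof replaces the entire content of the argument with the claim that it works, and the one structural claim you do make about the enumeration is incorrect.
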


\begin{proof} Clearly, $w>v$ for every pair $(w,v)\in T$. Moreover, it is easy to
see that $w$ and $v$ represent the same
element of $\HK_{\Theta}$. It remains to use the diamond lemma. We
will prove that all overlap and inclusive ambiguities of the
reduction system $T$ are resolvable. We begin with a simple
observation.

\begin{obs}\label{tosamo} Assume that $t \in V(\Theta)$ and $w \in F$ are
such that $t \nleftrightarrow w$. Then the words $tw$ and $wt$
have equal $T$-reduced forms.
\end{obs}

\begin{proof} We argue by induction on the length $|w|$ of $w$. If $w = 1$,
the assertion is clear. If $w \in V(\Theta)$, then we either have
$tw \ar{(iii)} wt$ or $wt \ar{(iii)} tw$, since $t \nleftrightarrow w$. We proceed with the induction step. Assume that $w =
y_1 \cdots y_k$, where $y_i \in V(\Theta)$, for $i = 1, \ldots,
k$. If $y_1 > t$, then we apply (iii) to $wt$ and we are done. If
there exists $i > 1$ such that $y_i > t$, then $y_1\cdots y_kt
\ar{(iii)} y_1\cdots y_{i-1}ty_{i}\cdots y_k$. Now we apply the
induction hypothesis to the words $ty_1\cdots y_{i-1}$ and
$y_1\cdots y_{i-1}t$ and $T$-reduce them to some $w' \in F$. Thus
we get that $tw$ and $wt$ can be both $T$-reduced to
$w'y_{i}\cdots y_k$. Finally, if $y_i < t$, for all $i$, then by
using reduction (iii) $k$ times we get: $$ty_1\cdots y_k
\ar{(iii)} y_1ty_2 \cdots y_k \ar{(iii)} y_1y_2ty_3\cdots y_k
\ar{(iii)} \cdots \ar{(iii)} y_1\cdots y_kt.$$
\end{proof}

\noindent We will now list overlap and inclusive ambiguities of all possible
types (x)-(y) of pairs of reductions in~$T$, where (x), (y) $\in \{$(i), (ii), (iii)$\}$.\\

\noindent There are two overlap and one inclusive ambiguity of type (i)-(i):
\begin{enumerate}
\item $tw_1(tw_2t) = (tw_1t)w_2t$, for $t \in V(\Theta)$ and $w_1, w_2 \in F$ such that $w_1w_2 \nrightarrow t$,
\item $ t_1w_1(t_2w_2t_1w_3t_2) = (t_1w_1t_2w_2t_1)w_3t_2$, for $t_1, t_2 \in V(\Theta)$, $w_1, w_2, w_3 \in F$ such that $w_1t_2w_2 \nrightarrow t_1$ and $w_2t_1w_3 \nrightarrow t_2$,
\item $(t_1w_1t_2w_2t_2w_3t_1) = t_1w_1(t_2w_2t_2)w_3t_1$, for $t_1, t_2 \in V(\Theta)$, $w_1, w_2, w_3 \in F$ such that  $w_1t_2w_2t_2w_3 \nrightarrow t_1$ and $w_2 \nrightarrow  t_2$.
\end{enumerate}

\noindent There are two overlap and one inclusive ambiguity of type (i)-(ii):
\begin{enumerate}
\setcounter{enumi}{3}
\item $ tw_1(tw_2t) = (tw_1t)w_2t$, for $t \in V(\Theta)$, $w_1, w_2 \in F$ such that $t \nrightarrow w_1$, $w_2 \nrightarrow t$,
\item $ t_1w_1(t_2w_2t_1w_3t_2) = (t_1w_1t_2w_2t_1)w_3t_2$, for $t_1, t_2 \in V(\Theta)$ and $w_1, w_2, w_3 \in F$ such that $w_2t_1w_3 \nrightarrow t_2$, $t_1 \nrightarrow w_1t_2w_2$,
\item $(t_1w_1t_2w_2t_2w_3t_1) = t_1w_1(t_2w_2t_2)w_3t_1$, for $t_1, t_2 \in V(\Theta)$ and $w_1, w_2, w_3 \in F$  such that $w_1t_2w_2t_2w_3 \nrightarrow t_1$ and $t_2 \nrightarrow w_2$.
\end{enumerate}

\noindent There are two overlap and three inclusive ambiguities of type (i)-(iii):
\begin{enumerate}
\setcounter{enumi}{6}
\item $ t_1w_1(t_2w_2t_2) = (t_1w_1t_2)w_2t_2$, for $t_1, t_2 \in V(\Theta)$, $w_1, w_2 \in F$ such that $t_1 > t_2$, $w_2 \nrightarrow t_2$ and $t_2 \nleftrightarrow t_1w_1$,

\item $t_1w_1(t_2w_2t_3w_3t_2) = (t_1w_1t_2w_2t_3)w_3t_2$, for $t_1, t_2, t_3 \in V(\Theta)$, $w_1, w_2, w_3 \in F$ such that $t_1 > t_3,$ $w_2t_3w_3 \nrightarrow t_2$ and $t_3 \nleftrightarrow t_1w_1t_2w_2$,
\item $(t_1w_1t_2w_2t_1) = (t_1w_1t_2)w_2t_1$, for $t_1, t_2 \in V(\Theta)$, $w_1, w_2 \in F$ such that $t_1 > t_2$, $w_1t_2w_2  \nrightarrow t_1$ and $t_2 \nleftrightarrow t_1w_1$,
\item $(t_1w_1t_2w_2t_3w_3t_1) = t_1w_1(t_2w_2t_3)w_3t_1$, for $t_1, t_2, t_3 \in V(\Theta)$, $w_1, w_2, w_3 \in F$ such that $t_2 > t_3,$ $w_1t_2w_2t_3w_3 \nrightarrow t_1$ and $t_3 \nleftrightarrow t_2w_2$,
\item $(t_1w_1t_2w_2t_1) = t_1w_1(t_2w_2t_1)$, for $t_1, t_2 \in V(\Theta)$, $w_1, w_2 \in F$ such that $t_2 > t_1$, $w_1t_2w_2 \nrightarrow t_1$ and $t_1 \nleftrightarrow t_2w_2$.

\end{enumerate}

\noindent There are two overlap and one inclusive ambiguity of type (ii)-(i):
\begin{enumerate}
\setcounter{enumi}{11}
\item $ tw_1(tw_2t) = (tw_1t)w_2t$, for $t \in V(\Theta)$, $w_1, w_2 \in F$ such that $w_1 \not\to t$, $t \nrightarrow w_2$,
\item $ t_1w_1(t_2w_2t_1w_3t_2) = (t_1w_1t_2w_2t_1)w_3t_2$, for $t_1, t_2 \in V(\Theta)$ and $w_1, w_2, w_3 \in F$ such that $t_2 \nrightarrow w_2t_1w_3$, $w_1t_2w_2 \nrightarrow t_1$,
\item $(t_1w_1t_2w_2t_2w_3t_1) = t_1w_1(t_2w_2t_2)w_3t_1$, for $t_1, t_2 \in V(\Theta)$ and $w_1, w_2, w_3 \in F$  such that $t_1 \nrightarrow w_1t_2w_2t_2w_3$ and $w_2 \nrightarrow t_2$.
\end{enumerate}

\noindent There are two overlap and one inclusive ambiguity of type (ii)-(ii):
\begin{enumerate}
\setcounter{enumi}{14}
\item $tw_1(tw_2t) = (tw_1t)w_2t$, for $t \in V(\Theta)$ and $w_1, w_2 \in F$ such that $t \nrightarrow w_1w_2$,
\item $ t_1w_1(t_2w_2t_1w_3t_2) = (t_1w_1t_2w_2t_1)w_3t_2$, for $t_1, t_2 \in V(\Theta)$, $w_1, w_2, w_3 \in F$ such that $t_1 \nrightarrow w_1t_2w_2$ and $t_2 \nrightarrow w_2t_1w_3 $,
\item $(t_1w_1t_2w_2t_2w_3t_1) = t_1w_1(t_2w_2t_2)w_3t_1$, for $t_1, t_2 \in V(\Theta)$, $w_1, w_2, w_3 \in F$ such that  $t_1 \nrightarrow w_1t_2w_2t_2w_3$ and $t_2 \nrightarrow  w_2$.
\end{enumerate}

\noindent There are two overlap and three inclusive ambiguities of type (ii)-(iii):
\begin{enumerate}
\setcounter{enumi}{17}
\item $ t_1w_1(t_2w_2t_2) = (t_1w_1t_2)w_2t_2$, for $t_1, t_2 \in V(\Theta)$, $w_1, w_2 \in F$ such that $t_1 > t_2,$ $t_2 \nrightarrow w_2$ and $t_2 \nleftrightarrow t_1w_1$,

\item $t_1w_1(t_2w_2t_3w_3t_2) = (t_1w_1t_2w_2t_3)w_3t_2$, for $t_1, t_2, t_3 \in V(\Theta)$, $w_1, w_2, w_3 \in F$ such that $t_1 > t_3$, $t_2 \nrightarrow w_2t_2w_3$ and $t_3 \nleftrightarrow t_1w_1t_2w_2$,
\item $(t_1w_1t_2w_2t_1) = (t_1w_1t_2)w_2t_1$, for $t_1, t_2 \in V(\Theta)$, $w_1, w_2 \in F$ such that $t_1 > t_2$, $t_1 \nrightarrow w_1t_2w_2$ and $t_2 \nleftrightarrow t_1w_1$,
\item $(t_1w_1t_2w_2t_3w_3t_1) = t_1w_1(t_2w_2t_3)w_3t_1$, for $t_1, t_2, t_3 \in V(\Theta)$, $w_1, w_2, w_3 \in F$ such that $t_2 > t_3,$ $t_1 \nrightarrow w_1t_2w_2t_3w_3$ and $t_3 \nleftrightarrow t_2w_2$,
\item $(t_1w_1t_2w_2t_1) = t_1w_1(t_2w_2t_1)$, for $t_1, t_2 \in V(\Theta)$, $w_1, w_2 \in F$ such that $t_2 > t_1$, $t_1 \nrightarrow w_1t_2w_2$ and $t_1 \nleftrightarrow t_2w_2$.

\end{enumerate}

\noindent There are two overlap and two inclusive ambiguities of type (iii)-(i):
\begin{enumerate}
\setcounter{enumi}{22}

\item $t_1w_1(t_1w_2t_2) = (t_1w_1t_1)w_2t_2$, for $t_1, t_2 \in V(\Theta)$, $w_1, w_2 \in F$ such that $t_1 > t_2,$ $w_1 \nrightarrow t_1$ and $t_2 \nleftrightarrow t_1w_2$,
\item $t_1w_1(t_2w_2t_1w_3t_3) = (t_1w_1t_2w_2t_1)w_3t_3$, for $t_1, t_2, t_3 \in V(\Theta)$, $w_1, w_2, w_3 \in F$ such that $t_2 > t_3,$ $w_1t_2w_2 \nrightarrow t_1$ and $t_3 \nleftrightarrow t_2w_2t_1w_3$,
\item $(t_1w_1t_1w_2t_2) = (t_1w_1t_1)w_2t_2,$ for $t_1, t_2 \in V(\Theta)$, $w_1, w_2 \in F$ such that $t_1 > t_2$, $w_1 \nrightarrow t_1$ and $t_2 \nleftrightarrow t_1w_1t_1w_2$,
\item $(t_1w_1t_2w_2t_2w_3t_3) = t_1w_1(t_2w_2t_2)w_3t_3$, for $t_1, t_2, t_3 \in V(\Theta)$, $w_1, w_2, w_3 \in F$ such that $t_1 > t_3$, $w_2 \nrightarrow t_2$ and $t_3 \nleftrightarrow t_1w_1t_2w_2t_2w_3$.
\end{enumerate}

\noindent There are two overlap and two inclusive ambiguities of type (iii)-(ii):
\begin{enumerate}
\setcounter{enumi}{26}

\item $t_1w_1(t_1w_2t_2) = (t_1w_1t_1)w_2t_2$, for $t_1, t_2 \in V(\Theta)$, $w_1, w_2 \in F$ such that $t_1 > t_2,$ $t_1 \nrightarrow w_1$, $t_2 \nleftrightarrow t_1w_2$,
\item $t_1w_1(t_2w_2t_1w_3t_3) = (t_1w_1t_2w_2t_1)w_3t_3$, for $t_1, t_2, t_3 \in V(\Theta)$, $w_1, w_2, w_3 \in F$ such that $t_2 > t_3$, $t_1 \nrightarrow w_1t_2w_2$ and $t_3 \nleftrightarrow t_2w_2t_1w_3$,
\item $(t_1w_1t_1w_2t_2) = (t_1w_1t_1)w_2t_2,$ for $t_1, t_2 \in V(\Theta)$, $w_1, w_2 \in F$ such that $t_1 > t_2$, $t_1 \nrightarrow w_1$ and $t_2 \nleftrightarrow t_1w_1t_1w_2$,
\item $(t_1w_1t_2w_2t_2w_3t_3) = t_1w_1(t_2w_2t_2)w_3t_3$, for $t_1, t_2, t_3 \in V(\Theta)$, $w_1, w_2, w_3 \in F$ such that $t_1 > t_3$, $t_2 \nrightarrow w_2$ and $t_3 \nleftrightarrow t_1w_1t_2w_2t_2w_3$.
\end{enumerate}

\noindent There are two overlap and three inclusive ambiguities of type (iii)-(iii):
\begin{enumerate}
\setcounter{enumi}{30}
\item $ t_1w_1(t_2w_2t_3) = (t_1w_1t_2)w_2t_3$, for $t_1, t_2, t_3 \in V(\Theta)$, $w_1, w_2 \in F$ such that $t_1 > t_2$, $t_2 > t_3$, $t_3 \nleftrightarrow t_2w_2$ and $t_2 \nleftrightarrow t_1w_1$,
\item $t_1w_1(t_2w_2t_3w_3t_4) = (t_1w_1t_2w_2t_3)w_3t_4$, for $t_1, t_2, t_3, t_4 \in V(\Theta)$, $w_1, w_2, w_3 \in F$ such that $t_1 > t_3,$ $t_2 > t_4$, $t_4 \nleftrightarrow t_2w_2t_3w_3$ and $t_3 \nleftrightarrow t_1w_1t_2w_2$,
\item $(t_1w_1t_2w_2t_3) = (t_1w_1t_2)w_2t_3$, for $t_1, t_2, t_3 \in V(\Theta)$, $w_1, w_2 \in F$ such that $t_1 > t_2$, $t_1 > t_3$ and $t_3 \nleftrightarrow t_1w_1t_2w_2$, $t_2 \nleftrightarrow t_1w_1$,
\item $(t_1w_1t_2w_2t_3w_3t_4) = t_1w_1(t_2w_2t_3)w_3t_4$, for $t_1, t_2, t_3, t_4 \in V(\Theta)$, $w_1, w_2, w_3 \in F$ such that $t_1 > t_4$, $t_2 > t_3$, $t_4\nleftrightarrow t_1w_1t_2w_2t_3w_3$ and $t_3 \nleftrightarrow t_2w_2$,
\item $(t_1w_1t_2w_2t_3) = t_1w_1(t_2w_2t_3)$, for $t_1, t_2, t_3 \in V(\Theta)$. $w_1, w_2 \in F$ such that $t_1 > t_3, t_2 > t_3$, $t_3 \nleftrightarrow t_1w_1t_2w_2$.
\end{enumerate}

\noindent We will now solve these ambiguities.

\begin{enumerate}

\item $tw_1(tw_2t) \ar{(i)} tw_1tw_2 \ar{(i)} tw_1w_2$,\\
      $(tw_1t)w_2t \ar{(i)} tw_1w_2t \ar{(i)} tw_1w_2$.

\item $t_1w_1(t_2w_2t_1w_3t_2) \ar{(i)} t_1w_1t_2w_2t_1w_3 \ar{(i)} t_1w_1t_2w_2w_3,$\\
      $(t_1w_1t_2w_2t_1)w_3t_2 \ar{(i)} t_1w_1t_2w_2w_3t_2 \ar{(i)} t_1w_1t_2w_2w_3,$ since $w_2w_3 \nrightarrow t_2$.

\item $(t_1w_1t_2w_2t_2w_3t_1) \ar{(i)} t_1w_1t_2w_2t_2w_3 \ar{(i)} t_1w_1t_2w_2w_3$,\\
      $t_1w_1(t_2w_2t_2)w_3t_1 \ar{(i)} t_1w_1t_2w_2w_3t_1 \ar{(i)} t_1w_1t_2w_2w_3,$ since $w_1t_2w_2w_3 \nrightarrow  t_1$.

\item $tw_1(tw_2t) \ar{(i)} tw_1tw_2 \ar{(ii)} w_1tw_2,$\\
      $(tw_1t)w_2t \ar{(ii)} w_1tw_2t \ar{(i)} w_1tw_2$.

\item $t_1w_1(t_2w_2t_1w_3t_2) \ar{(i)} t_1w_1t_2w_2t_1w_3 \ar{(ii)} w_1t_2w_2t_1w_3,$\\
      $(t_1w_1t_2w_2t_1)w_3t_2 \ar{(ii)}w_1t_2w_2t_1w_3t_2 \ar{(i)}  w_1t_2w_2t_1w_3.$

\item $(t_1w_1t_2w_2t_2w_3t_1) \ar{(i)} t_1w_1t_2w_2t_2w_3 \ar{(ii)} t_1w_1w_2t_2w_3,$\\
      $t_1w_1(t_2w_2t_2)w_3t_1 \ar{(ii)}t_1w_1w_2t_2w_3t_1 \ar{(i)}  t_1w_1w_2t_2w_3,$ since $w_1w_2t_2w_3 \nrightarrow  t_1$.

\item $t_1w_1(t_2w_2t_2) \ar{(i)} t_1w_1t_2w_2 \ar{(iii)} t_2t_1w_1w_2$,\\
      $(t_1w_1t_2)w_2t_2 \ar{(iii)} t_2t_1w_1w_2t_2$. Since $t_2 \nleftrightarrow t_1w_1$, then $t_1w_1w_2 \nrightarrow t_2$ and we have:\\
            $t_2t_1w_1w_2t_2 \ar{(i)} t_2t_1w_1w_2.$

\item $t_1w_1(t_2w_2t_3w_3t_2) \ar{(i)} t_1w_1t_2w_2t_3w_3 \ar{(iii)} t_3t_1w_1t_2w_2w_3,$\\
      $(t_1w_1t_2w_2t_3)w_3t_2 \ar{(iii)} t_3t_1w_1t_2w_2w_3t_2 \ar{(i)} t_3t_1w_1t_2w_2w_3$, since $w_2w_3 \nrightarrow t_2$.

\item $(t_1w_1t_2w_2t_1) \ar{(i)} t_1w_1t_2w_2 \ar{(iii)} t_2t_1w_1w_2$,\\
      $(t_1w_1t_2)w_2t_1 \ar{(iii)} t_2t_1w_1w_2t_1 \ar{(i)} t_2t_1w_1w_2$, since $w_1w_2 \nrightarrow t_1$.

\item $(t_1w_1t_2w_2t_3w_3t_1) \ar{(i)} t_1w_1t_2w_2t_3w_3 \ar{(iii)} t_1w_1t_3t_2w_2w_3$,\\
      $t_1w_1(t_2w_2t_3)w_3t_1 \ar{(iii)} t_1w_1t_3t_2w_2w_3t_1 \ar{(i)} t_1w_1t_3t_2w_2w_3$, since $w_1t_3t_2w_2w_3 \nrightarrow t_1$.

\item $(t_1w_1t_2w_2t_1) \ar{(i)} t_1w_1t_2w_2$,\\
      $t_1w_1(t_2w_2t_1) \ar{(iii)} t_1w_1t_1t_2w_2 \ar{(i)} t_1w_1t_2w_2$, since $w_1 \nrightarrow t_1$.

\item $tw_1(tw_2t) \ar{(ii)} tw_1w_2t$,\\
      $(tw_1t)w_2t \ar{(i)} tw_1w_2t$.

\item $t_1w_1(t_2w_2t_1w_3t_2) \ar{(ii)} t_1w_1w_2t_1w_3t_2 \ar{(i)} t_1w_1w_2w_3t_2$, since $w_1w_2 \nrightarrow t_1$,\\
          $(t_1w_1t_2w_2t_1)w_3t_2 \ar{(i)} t_1w_1t_2w_2w_3t_2 \ar{(ii)} t_1w_1w_2w_3t_2$, since $t_2 \nrightarrow w_2w_3.$

\item $(t_1w_1t_2w_2t_2w_3t_1) \ar{(ii)} w_1t_2w_2t_2w_3t_1 \ar{(i)} w_1t_2w_2w_3t_1$,\\
      $t_1w_1(t_2w_2t_2)w_3t_1 \ar{(i)} t_1w_1t_2w_2w_3t_1 \ar{(ii)} w_1t_2w_2w_3t_1$, since $t_1 \nrightarrow w_1t_2w_2w_3$.

\item$tw_1(tw_2t) \ar{(ii)} tw_1w_2t \ar{(ii)} w_1w_2t$,\\
         $(tw_1t)w_2t \ar{(ii)} w_1tw_2t \ar{(ii)} w_1w_2t$.

\item $t_1w_1(t_2w_2t_1w_3t_2) \ar{(ii)} t_1w_1w_2t_1w_3t_2 \ar{(ii)} w_1w_2t_1w_3t_2$, since $t_1 \nrightarrow w_1w_2$,\\
      $(t_1w_1t_2w_2t_1)w_3t_2 \ar{(ii)} w_1t_2w_2t_1w_3t_2 \ar{(ii)} w_1w_2t_1w_3t_2$.

\item $(t_1w_1t_2w_2t_2w_3t_1) \ar{(ii)} w_1t_2w_2t_2w_3t_1 \ar{(ii)} w_1w_2t_2w_3t_1$,\\
      $t_1w_1(t_2w_2t_2)w_3t_1 \ar{(ii)} t_1w_1w_2t_2w_3t_1 \ar{(ii)} w_1w_2t_2w_3t_1$, since $t_1 \nrightarrow w_1w_2t_2w_3$.

\item $t_1w_1(t_2w_2t_2) \ar{(ii)} t_1w_1w_2t_2$,\\
      $(t_1w_1t_2)w_2t_2 \ar{(iii)} t_2t_1w_1w_2t_2$. Since $t_2 \nleftrightarrow t_1w_1$, we have $t_2 \nrightarrow t_1w_1w_2$ and thus:\\
            $t_2t_1w_1w_2t_2 \ar{(ii)} t_1w_1w_2t_2.$

\item $t_1w_1(t_2w_2t_3w_3t_2) \ar{(ii)} t_1w_1w_2t_3w_3t_2 \ar{(iii)} t_3t_1w_1w_2w_3t_2$, since $t_1 > t_3$ and $t_3 \nleftrightarrow  t_1w_1w_2$,\\
      $(t_1w_1t_2w_2t_3)w_3t_2 \ar{(iii)} t_3t_1w_1t_2w_2w_3t_2 \ar{(ii)} t_3t_1w_1w_2w_3t_2$, since $t_2 \nrightarrow w_2w_3$.

\item $(t_1w_1t_2w_2t_1) \ar{(ii)} w_1t_2w_2t_1$,\\
      $(t_1w_1t_2)w_2t_1 \ar{(iii)} t_2t_1w_1w_2t_1 \ar{(ii)} t_2w_1w_2t_1$. \\
            Since $t_2 \nleftrightarrow w_1$, we can use Observation~\ref{tosamo} to reduce $w_1t_2$ and $t_2w_1$ to the same form.

\item $(t_1w_1t_2w_2t_3w_3t_1) \ar{(ii)} w_1t_2w_2t_3w_3t_1 \ar{(iii)} w_1t_3t_2w_2w_3t_1$,\\
         $t_1w_1(t_2w_2t_3)w_3t_1 \ar{(iii)} t_1w_1t_3t_2w_2w_3t_1 \ar{(ii)} w_1t_3t_2w_2w_3t_1$.

\item $(t_1w_1t_2w_2t_1) \ar{(ii)} w_1t_2w_2t_1 \ar{(iii)} w_1t_1t_2w_2$,\\
      $t_1w_1(t_2w_2t_1) \ar{(iii)} t_1w_1t_1t_2w_2 \ar{(ii)} w_1t_1t_2w_2,$ since $t_1 \nrightarrow w_1$.

\item $t_1w_1(t_1w_2t_2) \ar{(iii)} t_1w_1t_2t_1w_2$. Since $t_2 \nleftrightarrow t_1w_2$, we have $w_1t_2 \nrightarrow t_1$ and:\\
      $t_1w_1t_2t_1w_2 \ar{(i)}   t_1w_1t_2w_2$. \\
      $(t_1w_1t_1)w_2t_2 \ar{(i)} t_1w_1w_2t_2$. Since $t_2 \nleftrightarrow w_2$, then by Observation~\ref{tosamo} we can reduce $t_2w_2$ and $w_2t_2$ to the same form.

\item $t_1w_1(t_2w_2t_1w_3t_3) \ar{(iii)} t_1w_1t_3t_2w_2t_1w_3$. Since $t_3 \nleftrightarrow t_2w_2t_1w_3$, then $w_1t_3t_2w_2 \nrightarrow t_1$ and:\\
      $t_1w_1t_3t_2w_2t_1w_3 \ar{(i)}   t_1w_1t_3t_2w_2w_3$.\\
            $(t_1w_1t_2w_2t_1)w_3t_3 \ar{(i)} t_1w_1t_2w_2w_3t_3 \ar{(iii)} t_1w_1t_3t_2w_2w_3$, since $t_3 \nleftrightarrow t_2w_2w_3$ and $t_2 > t_3$.

\item $(t_1w_1t_1w_2t_2) \ar{(iii)} t_2t_1w_1t_1w_2 \ar{(i)} t_2t_1w_1w_2$,\\
      $(t_1w_1t_1)w_2t_2 \ar{(i)} t_1w_1w_2t_2 \ar{(iii)} t_2t_1w_1w_2$, since $t_1 > t_2$ and $t_2 \nleftrightarrow t_1w_1w_2$.

\item $(t_1w_1t_2w_2t_2w_3t_3) \ar{(iii)} t_3t_1w_1t_2w_2t_2w_3 \ar{(i)} t_3t_1w_1t_2w_2w_3$,\\
      $t_1w_1(t_2w_2t_2)w_3t_3 \ar{(i)} t_1w_1t_2w_2w_3t_3 \ar{(iii)}    t_3t_1w_1t_2w_2w_3$, since $t_3 \nleftrightarrow t_1w_1t_2w_2w_3$ and $t_1 > t_3$.

\item $t_1w_1(t_1w_2t_2) \ar{(iii)} t_1w_1t_2t_1w_2$. Since $t_2 \nleftrightarrow w_2$, we have $t_1 \nrightarrow w_1t_2$, and thus:\\
      $t_1w_1t_2t_1w_2 \ar{(ii)} w_1t_2t_1w_2$.\\
            $(t_1w_1t_1)w_2t_2 \ar{(ii)} w_1t_1w_2t_2 \ar{(iii)} w_1t_2t_1w_2$.

\item $t_1w_1(t_2w_2t_1w_3t_3) \ar{(iii)} t_1w_1t_3t_2w_2t_1w_3$. Since $t_3 \nleftrightarrow t_2w_2t_1w_3$, we have $t_1 \nrightarrow w_1t_3t_2w_2$ and:\\
      $t_1w_1t_3t_2w_2t_1w_3 \ar{(ii)} w_1t_3t_2w_2t_1w_3.$\\
            $(t_1w_1t_2w_2t_1)w_3t_3 \ar{(ii)} w_1t_2w_2t_1w_3t_3 \ar{(iii)} w_1t_3t_2w_2t_1w_3$.

\item $ (t_1w_1t_1w_2t_2) \ar{(iii)}    t_2t_1w_1t_1w_2 \ar{(ii)} t_2w_1t_1w_2$,\\
      $(t_1w_1t_1)w_2t_2 \ar{(ii)} w_1t_1w_2t_2 \ar{(iii)} w_1t_2t_1w_2$, since $t_1 > t_2$ and $t_2 \nleftrightarrow t_1w_2$.
      Here, again we can see that $w_1 \nleftrightarrow t_2$ and thus $t_2w_1$ and $w_1t_2$ can be reduced to the same word, by Observation~\ref{tosamo}.

\item $(t_1w_1t_2w_2t_2w_3t_3) \ar{(iii)} t_3t_1w_1t_2w_2t_2w_3 \ar{(ii)} t_3t_1w_1w_2t_2w_3$,\\
        $t_1w_1(t_2w_2t_2)w_3t_3 \ar{(ii)} t_1w_1w_2t_2w_3t_3 \ar{(iii)} t_3t_1w_1w_2t_2w_3$, since $t_3 \nleftrightarrow t_1w_1w_2t_2w_3$ and $t_1 > t_3$.

\item $t_1w_1(t_2w_2t_3) \ar{(iii)} t_1w_1t_3t_2w_2 \ar{(iii)} t_2t_1w_1t_3w_2$, since $t_1 > t_2$ and $t_2 \nleftrightarrow t_1w_1t_3$, \\
            $(t_1w_1t_2)w_2t_3, \ar{(iii)} t_2t_1w_1w_2t_3$. \\
            Since $t_3 \nleftrightarrow w_2$ then by Observation~\ref{tosamo} $t_3w_2$ and $w_2t_3$ can be reduced to the same form.

\item $t_1w_1(t_2w_2t_3w_3t_4) \ar{(iii)} t_1w_1t_4t_2w_2t_3w_3 \ar{(iii)} t_3t_1w_1t_4t_2w_2w_3$, since $t_3 \nleftrightarrow t_1w_1t_4t_2w_2$ and $t_1 > t_3$,\\
      $(t_1w_1t_2w_2t_3)w_3t_4 \ar{(iii)} t_3t_1w_1t_2w_2w_3t_4 \ar{(iii)} t_3t_1w_1t_4t_2w_2w_3$, since $t_4 \nleftrightarrow t_2w_2w_3$ and $t_2 > t_4$.

\item $ (t_1w_1t_2w_2t_3) \ar{(iii)} t_3t_1w_1t_2w_2 \ar{(iii)} t_3t_2t_1w_1w_2$, since $t_1 > t_2$ and $t_2 \nleftrightarrow t_1w_1$,\\
      $ (t_1w_1t_2)w_2t_3 \ar{(iii)} t_2t_1w_1w_2t_3 \ar{(iii)} t_2t_3t_1w_1w_2$, since $t_1 > t_3$ and $t_3 \nleftrightarrow t_1w_1w_2$. \\
            Since $t_2 \nleftrightarrow t_3$, we either have $t_2t_3 \ar{(iii)} t_3t_2$, or $t_3t_2 \ar{(iii)} t_2t_3$.

\item $(t_1w_1t_2w_2t_3w_3t_4) \ar{(iii)} t_4t_1w_1t_2w_2t_3w_3 \ar{(iii)} t_4t_1w_1t_3t_2w_2w_3$,\\
      $t_1w_1(t_2w_2t_3)w_3t_4 \ar{(iii)} t_1w_1t_3t_2w_2w_3t_4 \ar{(iii)} t_4t_1w_1t_3t_2w_2w_3$, since $t_1 > t_4$ and $t_4 \nleftrightarrow t_1w_1t_3t_2w_2w_3$.

\item $(t_1w_1t_2w_2t_3) \ar{(iii)} t_3t_1w_1t_2w_2,$\\
      $t_1w_1(t_2w_2t_3) \ar{(iii)} t_1w_1t_3t_2w_2 \ar{(iii)} t_3t_1w_1t_2w_2$, since $t_1 > t_3$ and $t_3 \nleftrightarrow t_1w_1$.

\end{enumerate}
We have checked that all ambiguities of the reduction system $T$ are resolvable. Thus the diamond lemma can be applied and the result follows.
\end{proof}

\noindent We are ready to prove our first main result.\\

\begin{proofx}
We have
$$N_{\Theta} = N_{(i)} \cup N_{(ii)} \cup N_{(iii)},$$
where $N_{\Theta}$ stands for the set of leading terms in pairs
from the set $T$ considered in Theorem~\ref{basis}, and $N_{(i)}, N_{(ii)}, N_{(iii)}$ are the sets of leading terms from
the three families (i), (ii), (iii) of reductions in $T$, respectively. We only need to show that the sets $N_{(i)}, N_{(ii)}, N_{(iii)}$ are regular.\\

\noindent Indeed, observe that  $N_{(i)} = \{tvt\, | \, t \in V(\Theta), v \in F, v \nrightarrow t\} =
\bigcup\limits_{t \in V(\Theta)}\{tvt \mid v \nrightarrow t\}$ which is a finite union of sets of the
form $t\langle Y_t \rangle t$, where $Y_t$
is the subset of $V(\Theta)$ consisting of all generators $z$ such that
$z \nrightarrow t$. All these summands are clearly regular. Thus $N_{(i)}$ is regular. A similar argument works for $N_{(ii)}$.  \\

\noindent   Finally,
$$N_{(iii)} = \bigcup\limits_{x,z \in V(\Theta), x<z, x\nleftrightarrow z} z\langle X_x\rangle x,$$
where $X_x \subseteq V(\Theta)$ is the subset consisting of all generators $y \in V(\Theta)$
such that  $y \nleftrightarrow x$. Again, these summands are clearly
regular. Therefore, the set $N_{(iii)}$  is regular as a union
of regular sets.\\

\noindent As a result, the entire set $N_{\Theta}$ is regular and
it is well known that this implies that the algebra
$k[\HK_{\Theta}]$  is automaton, see~\cite{ufnar}, p. 97. The fact
that $\GK(k[\HK_{\Theta}])$ is an integer, if it is finite,
follows, see~\cite{ufnar}, Theorem~3 on page 97 and Theorem~1 on
page 90.
\end{proofx}

\section{Gr\"obner basis of a cycle monoid}\label{cycle}

\noindent Let $C_n$ denote the Hecke--Kiselman monoid associated
to the cycle consisting of $n\ge 3$ vertices. The aim of this
section is to prove that in the case of $k[C_n]$ one can find a
finite subset of the Gr\"obner basis obtained in the previous
section such that it itself forms a Gr\"obner basis of $k[C_n]$.
Our interest in this special case comes from the fact that the
structure of the algebra
$k[C_{n}]$ is crucial for the study of an arbitrary algebra $k[\HK_{\Theta}]$.\\

\noindent Recall that the monoid $C_n$ is
defined by generators $x_1,\dotsc,x_n$ subject to the following
relations:
   \begin{gather*}
        x_i^2=x_i,\\
        x_ix_{i+1}x_i=x_{i+1}x_ix_{i+1}=x_ix_{i+1},
    \end{gather*}
    for all $i=1,\dotsc,n$ (with the convention that indices are taken modulo $n$) and
   \[x_ix_j=x_jx_i\]
    for all $i,j=1,\dotsc,n$ satisfying $1<i-j<n-1$ (note that for $n=3$ there are no relations of this type).  \\

\noindent The natural order $x_1 < x_2 < \cdots < x_n$ is
considered on the set of generators and the corresponding deg-lex
order on the free monoid $F$. We also adopt the following notation
in this section. When we write a word of the form: $x_i \cdots
x_j$, we mean that consecutive generators from $x_i$ up to $x_j$
if $ i<j$ (or down to $x_{j}$, if $i
> j$) appear in this word. For instance, $x_2 \cdots x_5$ denotes
$x_2x_3x_4x_5$ and $x_6\cdots x_3$ stands
for the word $x_6x_5x_4x_3$.\\

\noindent Consider two sets $S$ and $S'$ of reductions on $k[F]$.
The first one is a subset of the system $T$ considered in the previous section that consists of all pairs of the form:
\begin{enumerate}
    \item[(1)] $(x_ix_i,x_i)$ for all $i\in\{1,\dotsc,n\}$,
    \item[(2)] $(x_jx_i,x_ix_j)$ for all $i,j\in\{1,\dotsc,n\}$ such that $1<j-i<n-1$,
    \item[(3)] $(x_n(x_1\dotsm x_i)x_j,x_jx_n(x_1\dotsm x_i))$ for all $i,j\in\{1,\dotsc,n\}$ such that $i+1<j<n-1$,
    \item[(4)] $(x_iux_i,x_iu)$ for all $i\in\{1,\dotsc,n\}$ and $1\ne u\in F$ such that $u \nrightarrow x_i$. Here, $i-1 = n$, for $i = 1$
    (we say, for the sake of simplicity, that the word $x_iux_i$ is of type $(4x_i)$),
    \item[(5)] $(x_ivx_i,vx_i)$ for all $i\in\{1,\dotsc,n\}$ and $1\ne v\in F$ such that $x_i \nrightarrow u$. Here $i+1 = 1$, for $i =n$
    (similarly, we say that the word $x_ivx_i$ is of type $(5x_i)$).
\end{enumerate}

 \noindent The second set of reductions is a
subset $S'$ of $S$ consisting of:
\begin{itemize}
    \item[(i)] all pairs of type (1)-(3),
    \item[(ii)] all pairs
$(x_iux_i,x_iu)$ of type (4) such that $|u|_{x_{j}}\le 1$, for
$j\in\{1,\dotsc,n\}\setminus\{i,i-1\}$,
\item[(iii)] all pairs
$(x_ivx_i,vx_i)$ of type (5) such that $|v|_{x_{j}}\le 1$ for $j\in\{1,\dotsc,n\}\setminus\{i,i+1\}$,
\item[(iv)] all pairs $(x_izx_i,zx_i)$ of type (5) such that $i < n$ and:
$$x_izx_i = x_i(x_{i_1}\cdots x_{j_1})(x_{i_2}\cdots x_{j_2}) \cdots (x_{i_k} \cdots x_{j_k})x_n(x_1\cdots x_i),$$
where $i_1 < i_2 < \cdots < i_k < n$ and $j_1 < j_2 < \cdots < j_k < n$.
\end{itemize}
We will say that the word $x_iux_i$ that appears in (ii) is of
type $(4x_{i}')$, the word $x_ivx_i$ that appears in (iii) is of type
$(5x_{i}')$, and the word $x_izx_i$ that appears in (iv) is of type
$(5x_{i}'')$. We will also say that a word $x \in F$ is of type (1),
(2), or (3),
respectively, if $x$ is the leading term of one of the reductions of the corresponding type.\\

\noindent One can recognize reductions of type (1) and (4) as
subsets of the reduction set (i) from Theorem~\ref{main}.
Similarly, reductions of types (2), (3) are special cases of
reductions of type (iii) and reductions of type (5) correspond to
the subset (ii) of $T$. It is convenient to explicitly distinguish
five families of reductions of the system $S$, as they will be
repeatedly used in the process of reducing the size of the
Gr\"obner basis obtained in the
previous section.\\

\noindent We will prove two facts concerning the reduction sets $S$ and $S'$.

\begin{lemma}\label{rq} Let $T$ be a reduction set on $k[C_n]$ obtained in Section 2. If $w \in F$ is $T$-reduced, then it is also $S$-reduced.
\end{lemma}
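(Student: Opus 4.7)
The plan is to reduce the lemma to the inclusion $S\subseteq T$ of reduction pairs in $k[F]$. Once that inclusion is granted, the leading monomials of reductions in $S$ form a subset of the leading monomials of reductions in $T$; hence any word $w$ avoiding the latter as factors automatically avoids the former, so $T$-reduced implies $S$-reduced, as required.

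To prove $S\subseteq T$, I would walk through the five families (1)--(5) defining $S$ and match each to one of the families (i)--(iii) from Theorem~\ref{basis}, specialized to $\Theta=C_n$. Families (4) and (5) are literally (i) and (ii) of $T$ with $t=x_i$, under the cyclic index conventions $i-1=n$ when $i=1$ and $i+1=1$ when $i=n$. Family (1) is the subcase $w=1$ of (i), the condition $w\nrightarrow x_i$ being vacuous for the empty word. Family (2) is the subcase $w=1$ of (iii), obtained by taking $t_1=x_j$ and $t_2=x_i$: the restriction $1<j-i<n-1$ is precisely the non-adjacency of $x_i$ and $x_j$ in the cycle, i.e.\ $x_i\nleftrightarrow x_j$.

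The only family requiring a small computation is (3). I would take $t_1=x_n$, $t_2=x_j$, $w=x_1\cdots x_i$ in family (iii); the inequality $j<n-1$ gives $t_1>t_2$, and the nontrivial condition $x_j\nleftrightarrow x_n x_1\cdots x_i$ reduces to verifying that the two cycle-neighbors $x_{j-1},x_{j+1}$ of $x_j$ lie outside $\{x_n,x_1,\dots,x_i\}$. Both exclusions follow at once from $i+1<j<n-1$. After this, all five families of $S$ are accounted for inside $T$, the inclusion $S\subseteq T$ is established, and the lemma follows. No essential obstacle arises; the entire argument is bookkeeping, the only conceptual ingredient being the cycle-adjacency check for family (3).
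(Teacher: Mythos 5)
Your verification that $S\subseteq T$ is correct, and it does prove the implication exactly as the lemma is literally worded: every leading term of a reduction in $S$ is a leading term of a reduction in $T$, so a word with no $T$-reducible factor has no $S$-reducible factor. Your adjacency check for family (3) is also right. In fact the paper itself records this inclusion in the paragraph preceding the lemma (``One can recognize reductions of type (1) and (4) as subsets of the reduction set (i)\dots''), so as far as the printed statement goes, your argument is complete but essentially restates something already observed.

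The difficulty is that the printed statement appears to have its hypothesis and conclusion interchanged relative to what the paper actually proves and needs. The paper's proof opens with ``Assume, to the contrary, that some word $w\in F$ is $S$-reduced, but not $T$-reduced'': it establishes the \emph{converse} implication, that every $S$-reduced word is $T$-reduced. Given $S\subseteq T$, the direction you proved is immediate, while the converse is the substantive one, and it is the direction required later: to transfer the diamond lemma from $T$ to the smaller system $S$ (and hence obtain Theorem~\ref{fingr}) one must know that discarding reductions does not enlarge the set of reduced words, i.e.\ that any word containing a type (iii) factor $x_kwx_i$ of $T$ (with $k>i$ and $x_i\nleftrightarrow x_kw$) already contains a factor of one of the forms (1)--(5). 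The paper proves this by induction on $|w|$, with a case split on $k=n$ versus $k<n$ that exploits the cycle structure of $C_n$: either a type (2) factor $x_sx_{s'}$ with non-adjacent indices appears at once, or $w$ is forced to be a consecutive run ($x_nx_1\cdots x_r$ or $x_kx_{k-1}\cdots x_m$), in which case a factor of type (3) or (2) is exhibited at the end. None of this appears in your argument, so while your proof is valid for the statement as transcribed, it does not capture the content of the paper's proof and would not suffice for the results that depend on this lemma.
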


\begin{lemma}\label{sprim} Every $S'$-reduced word in $F$ is
$S$-reduced.

\end{lemma}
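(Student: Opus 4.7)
\noindent\emph{Proof proposal for Lemma~\ref{sprim}.} The plan is to prove the contrapositive: if $w\in F$ contains a factor that is a leading term of some pair in $S$, then $w$ contains a factor that is a leading term of some pair in $S'$. Every pair of type (1), (2), or (3) already lies in $S'$, and the clauses (ii)--(iv) exhaust the pairs of types (4)--(5) that lie in $S'$; hence it suffices to handle the case when $w$ contains a factor $x_iux_i$ that is a leading term of type (4) but not of type (4'), or of type (5) but of neither (5') nor (5''). The two cases run in parallel; in both, $|u|_{x_j}\ge 2$ for some $x_j$ outside an exceptional pair ($\{i,i-1\}$ in the type-(4) situation, $\{i,i+1\}$ in the type-(5) situation).

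Assume toward contradiction that $w$ is $S'$-reduced yet contains such a bad factor. Among all bad factors of $w$, pick $x_iux_i$ with $|u|$ minimal. Within $u$, pick a repeated letter $x_j$ and two of its closest occurrences, writing $u=u_1x_ju_2x_ju_3$ with $x_j\notin u_2$. Any letter appearing at least twice in $u_2$ would furnish a strictly closer repeated pair, contradicting the minimality of our choice; therefore every letter of $V(\Theta)$ occurs in $u_2$ at most once.

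I now inspect the inner factor $x_ju_2x_j$. If $u_2\nrightarrow x_j$, then $x_ju_2x_j$ is a leading term of type (4) in $S$; since every letter in $u_2$ appears at most once, it satisfies the multiplicity condition of (ii), so it is in fact a leading term of type (4'), hence a pair in $S'$. As a factor of $w$, this contradicts the $S'$-reducedness of $w$. If instead $x_j\nrightarrow u_2$, the same reasoning yields a leading term of type (5'), again a pair in $S'$, again contradicting $S'$-reducedness.

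The remaining case, when $u_2$ contains both $x_{j-1}$ and $x_{j+1}$, is the delicate one: the inner factor $x_ju_2x_j$ is a leading term of no pair in $S$, so the required $S'$-factor of $w$ must be located elsewhere. Here the plan is to exploit the $S'$-reducedness of $w$ to restrict the shape of $u$: no consecutive descent that is not cycle-adjacent (type (2)) and no factor of the shape $x_n x_1\cdots x_{i'} x_{j'}$ with $i'+1<j'<n-1$ (type (3)) may occur in $w$. Combined with the absence from $u$ of $x_i$ and of one of its cycle-neighbors, these constraints are shown either to force $x_iux_i$ itself to match the template (iv) (which is possible only in the type-(5) scenario, and contradicts the assumption that $x_iux_i$ is bad), or to guarantee the existence of an $S'$-leading term as a factor of $w$---typically an inner factor of the form $x_{j\pm 1}x_j x_{j\pm 1}$ or a boundary factor $x_k x_i$ of type (2). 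The main obstacle is precisely this last case: locally nothing is reducible inside $x_ju_2x_j$, and the contradiction must be produced globally, using the cyclic structure of $C_n$ together with the rigidity imposed by the absence of types (1)--(3) factors in $w$; this combinatorial analysis is the real content of the lemma, and motivates the specific form of clause (iv) in the definition of $S'$.
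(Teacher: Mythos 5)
Your reduction is sensible and genuinely different from the paper's: instead of taking a deg-lex-minimal counterexample that is itself a leading term of type (4) or (5) (as the paper does), you localize to a shortest bad factor and then to a closest pair of equal letters $x_j\cdots x_j$ inside it, which cleanly disposes of the situations where the inner factor $x_ju_2x_j$ is itself of type (4) or (5) --- there it is automatically of type $(4x_j')$ or $(5x_j')$ and you get your contradiction. The problem is that the one case you label ``delicate'' --- where $u_2$ contains both $x_{j-1}$ and $x_{j+1}$, so that $x_ju_2x_j$ is a leading term of \emph{no} reduction in $S$ --- is not proved; your last paragraph is a plan, not an argument, and you say so yourself (``this combinatorial analysis is the real content of the lemma''). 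That case is non-vacuous and locally irreducible: for $n\ge 4$ the factor $x_2x_1x_nx_{n-1}\cdots x_3x_2$ contains no leading term of types (1)--(3), every letter of $u_2=x_1x_nx_{n-1}\cdots x_3$ occurs once, and $u_2$ contains both neighbours of $x_2$ in the cycle, so no contradiction can be extracted from the inner factor at all; the required $S'$-factor must be hunted down elsewhere in the ambient bad word, using where $x_{i-1}$ (resp.\ $x_{i+1}$) is forbidden and where $x_n$ can sit.

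Carrying this out is exactly what the paper's proof spends its length on: one first needs the structural statement (Observation~\ref{pe}) that an $x_n$-free word avoiding types (1)--(3), $(4x_l')$, $(5x_l')$ decomposes into blocks $(x_{i_1}\cdots x_{j_1})\cdots(x_{i_k}\cdots x_{j_k})$ with $i_1<\cdots<i_k$ and $j_1<\cdots<j_k$, and then a case analysis on whether $x_n$ occurs in $u$ and on the segment after its last occurrence; this is also the only place where the reductions of type $(5x_i'')$ --- clause (iv) of $S'$ --- become necessary, and your proposal never actually uses them. So the skeleton is fine and arguably cleaner than the paper's up to the point of reduction, but the lemma is not yet proved: you must either establish an analogue of Observation~\ref{pe} and run the block/$x_n$ analysis inside your minimal bad factor, or show directly that in the remaining case the enclosing word forces a factor of type (2), (3), $(4x')$, $(5x')$ or $(5x'')$ somewhere.
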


\noindent  The first lemma is a simple observation that is an
intermediate step towards the main result of this section. \\

\begin{proofy} Assume, to the contrary, that some word $w \in F$ is $S$-reduced, but not
$T$-reduced. Clearly, it is enough to consider the case where $w$ is of the form
(iii) from the definition of $T$, namely $v = x_kwx_i$,
where $k > i$ and $x_i \nleftrightarrow x_kw$. We will use inductive argument to show that $v$ is not $S$-reduced, which leads to a contradiction.\\

\noindent Of course, if $|w| = 0$ then $x_kx_i \ar{(2)} x_ix_k$,
so $x_kx_i$ is $S$-reducible. We proceed with the inductive step.
Let $|w| > 0$ and let $w = x_{i_1}\cdots x_{i_r}$, for some
$x_{i_s} \in \{x_1,\ldots, x_n \} $ such that $x_{i_s} \nleftrightarrow x_i$,
for $1 \leq s \leq r$. If for any $s$ we have $i_s > i$
then the factor $x_{i_s}\cdots x_{i_r}x_i$ is of the form (iii)
and thus it is not $S$-reduced, by the induction hypothesis.
So we only need to consider the case where $i_s \leq i < k$, for all $s$.
In particular, we have $i_1 \leq i < k$. We consider two cases.\\

\noindent Case 1. $k = n$. Here we must have $i_1 = 1$. Otherwise, an
$S$-reducible factor $x_nx_{i_1}$ appears in $v$ and the induction
step follows. If an $S$-reducible factor of the form (3) appears in $v$,
then we are done, so we may only consider the case where
$i_2 = 2, i_3 = 3, \ldots, x_r = r$. However, it follows that $r < i$,
since $i_s < i$, for all $s$. Since $x_i \nleftrightarrow w = x_nx_1\cdots x_r$,
we must have $i > r+1$, which means that $v$ is of the form (3) and $w$ is
thus $S$-reducible. The induction step follows again.\\

\noindent Case 2. $k < n$. In this case we either have $i_1 < k-1$ and
an $S$-reducible factor $x_kx_{i_1}$ appears in $v$, which yields the
induction step, or $i_1 = k - 1$. In the latter case we have
$ v = x_kx_{k-1}x_{i_2} \cdots x_rx_i$. However now we can repeat
the argument for $i_1$ to obtain that the only relevant case is
$i_2 = k-2.$ Indeed, we have $i_2 \neq k-1$, $i_2 \neq k$ and
$i_2 \leq i < k$. If we were to assume that $i_2 < k-2$, then
the $S$-reducible factor $x_{k-1}x_{i_2}$ would appear in $v$,
which would immediately yield the inductive step. After repeating
this process we are left with the case when
$v x_i= x_kx_{k-1}x_{k-2} \cdots x_m \cdot x_i$. However,
since $k > i$ and $x_i \nleftrightarrow x_kw$, we have  $k > m-1$,
so we get and an $S$-reducible factor $x_mx_i$. Thus, the induction step follows again.\\

\noindent We have shown that the word $v$ of the form (iii) is $S$-reducible,
which yields a contradiction. The assertion follows.
\end{proofy}

\noindent Before proving Lemma~\ref{sprim}, we will prove the following fact
 concerning certain special family of words.

    \begin{obs}\label{pe} Assume that $1 \neq p \in F$
     is such that $|p|_{x_{n}} = 0$ and $p$
    does not contain factors of the forms $(1)$-$(3)$, $(4x_i')$, $(5{x_i}')$, where $1 \leq i \leq n$.
    Then there exists $k \in \mathbb{N}$ such that $p$ is of the form:
    \begin{equation} (x_{i_1}\cdots x_{j_1})(x_{i_2}\cdots x_{j_2}) \cdots (x_{i_k} \cdots x_{j_k}), \label{formp}\end{equation}
    where $i_1 < i_2 < \cdots < i_k$ and $j_1 < j_2 < \cdots < j_k$, if $k > 1$.
    \end{obs}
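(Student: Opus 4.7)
\noindent The plan is to cut $p$ at every commuting ascent and check that what remains between cuts is a monotone run of $\pm 1$ steps with strictly increasing endpoints. Since $|p|_{x_n}=0$, rule $(1)$ forbids squares and rule $(2)$ forbids every adjacent pair $x_cx_d$ with $c-d\ge 2$, so consecutive letters of $p$ always differ in index by $-1$, $+1$, or some $d\ge 2$. Declaring a block boundary at each $\ge +2$ jump writes $p=B_1\cdots B_k$ where every interior step of $B_s$ is $\pm1$; the claim then reduces to (a) showing that each $B_s$ is monotone, so that $B_s=(x_{i_s}\cdots x_{j_s})$, and (b) proving $i_1<\cdots<i_k$ and $j_1<\cdots<j_k$.

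\noindent For (a), a direction change inside a block would produce a factor $x_a x_{a+1} x_a$ or $x_a x_{a-1} x_a$. The first is a $(4x_a')$-factor via $u=x_{a+1}$ (trivially simple, with $|u|_{x_a}=|u|_{x_{a-1}}=0$) and the second is a $(5x_a')$-factor via $v=x_{a-1}$ (trivially simple, with $|v|_{x_a}=|v|_{x_{a+1}}=0$). Both are excluded by hypothesis, so each $B_s$ is monotone, and by construction $i_{s+1}\ge j_s+2$ at every block boundary.

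\noindent For (b), the inequality $i_{s+1}>i_s$ follows immediately from $i_{s+1}\ge j_s+2$ whenever $B_s$ is ascending or has length at most two. The remaining case is $B_s$ descending of length $\ge 3$ with $j_s+2\le i_{s+1}\le i_s$: then $x_{i_{s+1}}$ occurs inside $B_s$ and
$$x_{i_{s+1}}\bigl(x_{i_{s+1}-1}\,x_{i_{s+1}-2}\cdots x_{j_s}\bigr)x_{i_{s+1}}$$
is a factor of $p$ whose middle is simple and descending and avoids $x_{i_{s+1}}$ and $x_{i_{s+1}+1}$, making it a $(5x_{i_{s+1}}')$-factor, a contradiction. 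Symmetrically $j_{s+1}>j_s$ is automatic unless $B_{s+1}$ is descending with $j_{s+1}\le j_s$; in that case $x_{j_s}$ reappears inside $B_{s+1}$ and the factor
$$x_{j_s}\bigl(x_{i_{s+1}}\,x_{i_{s+1}-1}\cdots x_{j_s+1}\bigr)x_{j_s}$$
has a simple descending middle avoiding $x_{j_s}$ and $x_{j_s-1}$, so it is a $(4x_{j_s}')$-factor, another contradiction.

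\noindent The main obstacle is the bookkeeping in the last paragraph: in each pair of ``ascending/descending'' orientations for $B_s$ and $B_{s+1}$ one must verify that either the gap $i_{s+1}\ge j_s+2$ yields the strict inequality for free, or one of the two simple $(4')$- or $(5')$-factors above appears and contradicts the hypothesis. Rule $(3)$ plays no role here because $|p|_{x_n}=0$.
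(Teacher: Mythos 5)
Your proof is correct and follows essentially the same route as the paper's: decompose $p$ into monotone blocks, use rules $(1)$, $(2)$ and the singleton instances of $(4x_a')$, $(5x_a')$ to show consecutive blocks are separated by an upward jump of at least $2$, and then invoke $(4')$, $(5')$ with longer simple monotone middles to force $i_s<i_{s+1}$ and $j_s<j_{s+1}$. The only cosmetic difference is that you define blocks by cutting at the $\ge +2$ jumps and then prove each block is monotone, whereas the paper defines blocks as maximal factors of the form $x_i\cdots x_j$ and then bounds the jump between consecutive blocks.
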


\begin{proof} We need some additional notation.  We will say that a factor $v$ of a word $w
\in F$ is a block if $v$ is of the form $x_{i}\cdots x_{j}$, for
some $1 \leq i,j < n$, but there is no factor $v'$ of $w$ such
that $v$ is a factor of $v'$, the latter is also of the form
$x_{i'}\cdots x_{j'}$, for some $1 \leq i',j' < n$, and $v \neq
v'$. The length of a block $v$ is defined as the number $|j-i+1|$.
The block is called increasing if $i \leq j$
and decreasing if $i \geq j$ (note that $|p|_{x_{n}} = 0$).\\

\noindent Take $p \neq 1$ such that $|p|_{x_{n}} = 0$. Since $p$
cannot have subwords of the form $x_j x_{j+1}x_{j}$ or
$x_{j}x_{j-1}x_{j}$ (conditions $(4x_i')$, $(5{x_i}')$,
respectively), it follows that $p$ is (in a unique way) a product
of blocks and, by definition, the product of two consecutive
blocks is not a block. If $p$ is a product of an exactly one block
then there is nothing to prove -- $p$ is of the form~\eqref{formp}. Assume that $p$ is a product of at least two blocks
and take two consecutive blocks of the form $(x_{i_s}\cdots
x_{j_s})(x_{i_{s+1}}\cdots x_{j_{s+1}})$. Observe first, that we
cannot have $i_{s+1} \leq j_s +1$. Indeed, if $i_{s+1} <j_s  - 1$,
then a factor of type (2) would appear in $p$, a contradiction. If
we had $i_{s+1} = j_s \pm 1$, then either the product of the two
blocks $(x_{i_s}\cdots x_{j_s})(x_{i_{s+1}}\cdots x_{j_{s+1}})$ is
a block itself, or a factor of one of the forms
$x_{j_s}x_{j_{s}-1}x_{j_s}$, $x_{j_s}x_{j_{s}+1}x_{j_s}$ appears
in $p$, again a contradiction. Of course, we cannot have $i_{s+1}
= j_s$, as this yields a factor of type (1) in $p$.

We will prove that
$i_s < i_{s+1}$. Note that we cannot have $i_s = i_{s+1}$
 since this immediately
gives a factor $x_{i_s}\cdots x_{j_s}x_{i_{s}}$ of type
$(4x_{i_s}')$ or $(5x_{i_s}')$ in $p$, a contradiction. Assume, to
the contrary, that $i_{s+1} < i_{s}$. We already know that must
have $i_{s+1} > j_s +1$, so $j_s +1 < i_{s+1} < i_{s}$ and thus
the first block is decreasing of length $> 1$ and the factor of
the form $x_{i_{s+1}} \cdots x_{j_s} x_{i_{s+1}}$ of type
$(5x_{i_{s+1}}')$ appears in $p$, a contradiction. So $i_s <
i_{s+1}$. The inequality $j_s < j_{s+1}$ is proved in a completely
analogous way.  \end{proof}

\begin{proofxx}
Assume, to the contrary, that some word $w \in F$ is $S'$-reduced, but not
$S$-reduced. We may choose $w$ to be minimal with respect to the
deg-lex order on $F$. It is clear that $w$ may only be of the form
$(4x_i)$ or $(5x_i)$.\\

\noindent We will first consider the case $(4x_i)$; in other words
$w = x_iux_i$, for some $u \neq 1$, $|u|_{x_{i-1}} = |u|_{x_{i}} = 0$
(if $i = 1$, then $i-1 = n$).\\

\noindent First, observe that $i \neq n$. Indeed, if $i = n$, then
as $w$ is S'-reduced and $|u|_{x_{n}} = 0$, $u$ is of the form~\eqref{formp} and
\begin{equation} w = x_n(x_{i_1}\cdots x_{j_1})(x_{i_2}\cdots x_{j_2}) \cdots (x_{i_k} \cdots x_{j_k})x_n, \label{forman}\end{equation}
for some $k$ and $i_1 < i_2 < \cdots < i_k$ and $j_1 < j_2 <
\cdots < j_k$, if $k > 1$. As $|w|_{x_{n-1}} = 0$ and $x_nx_{i_1}$
cannot be of the form (2), we have $i_1 = 1$ and the first block
of $u$ is increasing. If $k > 1$, however, then $i_2 > j_1 + 1$,
since otherwise a factor $x_{i_2}\cdots x_{j_1}x_{i_2}$ of the
form ($4x_{i_2}'$) appears in $w$, which is impossible. But if $i_2
\neq n-1$, then $x_nx_{i_1}\cdots x_{j_1}x_{i_2}$ is a factor of
type (3) in $w$, a contradiction. Thus $k = 1$. In this case,
however, $w = x_n(x_{1}\cdots x_{j_1})x_n$
is of the form $(4x_{n}')$, again a contradiction. Therefore $i \neq n$.\\

\noindent Let $t = \max\{l: |u|_{x_{l}} \neq 0\}$. Of course, $t > 1$ as
otherwise $w$ is $S'$-reducible. Moreover,  $t > i$ since
otherwise $w$ has a prefix $x_ix_m$ with $m<i-1$, which is a word
of the form (2), a contradiction. We consider two cases: $1 < t
<n$ and $t=n$.
\begin{itemize}
    \item Case 1. $1 < t < n$. Since $w$ is $S'$-reduced and $|w|_{x_{n}} = 0$,
    then by Observation~\ref{pe} $w$ is of
    the form~\eqref{formp}, for some $k$ and $i_1 < \cdots < i_k$, $j_1 < \cdots < j_k$,
    if $k > 1$.
     But since $w$ is of the form $(4x_i)$, the first block of $w$ must begin with $x_i$,
     and the last block must end with $x_i$. If the length of the first block
    $x_i \cdots x_{j_1}$ was greater than 1, then this block must have been increasing,
    since $|w|_{x_{i-1}} = 0$. However,  in this case $i = i_1 < j_1 \leq j_k = i$, which is
    impossible. Thus the first block of $w$ consists just of $x_i$. If $k = 1$,
    then $w = x_i$, a contradiction. If $k > 1$, then $j_k > j_1 $, which is impossible,
    as $i_1 = j_1 < j_k = i$. Again, a contradiction.

\item Case 2. $t = n$. Then $i\neq 1$ because we are in the case $(4x_i)$.
Consider the last appearance of $x_n$ in $w$,
namely let $w = x_ipx_nqx_i$, where $p, q \in X$ and $|q|_{x_{n}} =
0$. First, assume that $q = 1$. Then $i$ must be equal to $n-1$
 since otherwise we would have a factor of type (2) in $w$.
 Hence $w = x_{n-1}px_nx_{n-1}$. If $p = 1$, then $w$ is of
type $(4x_{n-1}')$, which is impossible as $w$ is $S'$-reduced. Thus
$p = x_np'$, since otherwise $w$ contains a factor $x_{n-1}x_s$ of
type (2). Thus $w$ has a proper factor $x_np''x_n$ of type
$(4x_n)$, contradicting the minimality of the word $w$.
Thus we may assume that $q \neq 1$.

Since $w$ is $S'$-reduced, also $qx_i$ is $S'$-reduced and since
$|qx_i|_{x_{n}} = 0$, as $i < n$, we can apply Observation~\ref{pe} and
assume that it is of the form~\eqref{formp}, for some $k$ and $i_1
< \cdots < i_k$ and $j_1 < \cdots < j_k$, if $k > 1$. However,
since $x_nx_{i_1}$ is a factor of $w$ we must have $i_1 = n-1$ or
$i_1 = 1$, as otherwise $w$ has a factor of type (2). We consider
these subcases now:
\begin{itemize}
\item[(a)] If $i_1 = n-1$, then there is only one block in the decomposition~\eqref{formp} of $qx_i$,
otherwise another block of $qx_i$ would have to begin with $x_{i_2}$,
where $i_2 > i_1$ and also $n >i_2$. This is impossible.
 Therefore $w = x_ipx_nx_{n-1} \cdots x_i$.  If
$p = 1$ then $w$ is the form $(4x_{i}')$, a contradiction. Assume
that $p \neq 1$. Then $x_ipx_n\cdots x_{i+1}$ cannot contain two
occurrences of $x_{i+1}$ as that would yield a factor of the form
$(4x_{i+1})$ in $w$, which contradicts its minimality. Thus
$|x_ipx_n\cdots x_{i+2}|_{x_{i+1}} = 0$ and we can see that
$x_ipx_n\cdots x_{i+2}$ cannot contain two occurrences of
$x_{i+2}$. Continuing this way, we can see that $|p|_{x_{l}} = 0$,
for $n \geq l > i-1$. Thus $p = x_mp'$, for some $p'$, for some $m
< i-1$ and thus we have a factor $x_ix_m$ of type (2) in $w$, a
contradiction.
\item[(b)] If $i_1 = 1$, then $qx_i$ is of the form $(x_{1}\cdots x_{j_1}) \cdots (x_{i_k}
\cdots x_i)$. We cannot have $k = 1$, since in that case, we would
have a factor of the form $x_1\cdots x_i$ in $w$. Its length would
be greater than 1, since $i \neq 1$. Therefore, $w$ would contain
$x_{i-1}$, a contradiction. If $k > 1$ then as in the case of
words of the form~\eqref{forman} we have $i_2 = n-1$. This easily
implies that $k = 2$ and $w = x_ipx_n(x_{1}\cdots x_{j_1})(x_{n-1}
\cdots x_i)$. Next, if $p = 1$ then, since  $1 \leq  j_1 < i - 1$
the word $w$ is of the form $(4x_{i}')$, whence $w$ is $S'$-reducible, a
contradiction. Let $p \neq 1$. As in the previous subcase, we can
easily see that $|p|_{x_{l}}=0$ for $l=i+1, \ldots , n-1$. Again, if
$|p|_{x_{n}} \neq 0$, the minimality of $w$ is violated, and thus $|p|_{x_{n}} = 0$. Therefore $p =
x_mp'$, for some $m < i-1$. As in the previous case, a factor $x_ix_m$ of type
(2) appears in $w$, a contradiction.
\end{itemize}
\end{itemize}

\noindent We have proved that if $w$ is $S'$-reduced of the form
$(4x_i)$, then it is also $S$-reduced. Assume now
that $w$ is a minimal $S'$-reduced word of the form $(5x_i)$
 with respect to the deg-lex order on $F$. Namely, $w = x_iux_i$, for
some $u \neq 1$, $|u|_{x_{i+1}} = |u|_{x_{i}} = 0$. Formally, we need to note that $i+1 = 1$, if $i = n$,
but we will begin with showing that in fact $i \neq n$. \\

\noindent Assume the contrary, that $w = x_nux_n$ is of the form
$(5x_n)$. Thus $|u|_{x_{n}} = 0$ and by Observation~\ref{pe} $u$ must be of
the form~\eqref{formp}, for some $k$ and $i_1, \ldots, i_k$, $j_1,
\ldots, j_k$. Since $x_nx_{i_1}$ cannot be a factor of the form
(2), and $i_1 \neq 1$, we must have $i_1 = n-1$. This implies,
that $u$ is a product of only one block $x_{n-1}\cdots x_{j_1}$.
Since $j_1 > 1$
we can see that $w$ is of the form $(5x_n')$, a contradiction. Thus $i < n$.\\

\noindent Our approach will be similar to that from the first part
of the proof. Again, consider $t = \max\{l: |u|_{x_{l}} \neq 0\}$.
Clearly, $t > 1$, as otherwise $w$ is $S'$-reducible. The proof
breaks into two cases:

\begin{itemize}
\item Case 1. $t < n$. Since $w$ is $S'$-reduced and $|w|_{x_{n}} = 0$, it satisfies
the conditions of Observation~\ref{pe}. Thus it must be of the
form~\eqref{formp}, namely $w = (x_{i_1}\cdots
x_{j_1})(x_{i_2}\cdots x_{j_2}) \cdots (x_{i_k} \cdots x_{j_k}), $
    where $i_1 < i_2 < \cdots < i_k$ and $j_1 < j_2 < \cdots < j_k$, if $k > 1$.
    Of course, $w$ cannot consist of only one block $x_{i_1}\cdots x_{j_1}$, since otherwise
    we have $i_1 = j_1 = i$ and thus $w=x_{i}$, a contradiction. Hence $k >
    1$. We claim that $j_l \geq i$, for all $l > 1$. Indeed, if we had $j_l < i$, for some
     $1 < l \leq k$,
    then the block $x_{i_l}\cdots x_{j_l}$ would have to be decreasing, as $i_l > i_1 = i$,
    and thus it would contain $x_{i+1}$, a contradiction with the fact that $w$ is of the form $(5x_i)$.
    Thus $j_l \geq i$, for all $l > 1$.
    Consider the second block $x_{i_2}\cdots x_{j_2}$ of $w$.
     Of course $i_2 > i_1 = i$. If we had $j_2 = i$, then the entire second block
     of $w$ would be decreasing and it would contain $x_{i+1}$, a contradiction. So
     $j_2 > i$. It follows that $i = j_k \geq j_2 > i$, and we arrive
     at a contradiction, again.

\item Case 2. $ t= n$. Notice that $i\neq n-1$ in this case.
We assume, again, that $w = x_ipx_nqx_i$, where $|q|_{x_{n}} = 0$.
To avoid the appearance of a factor of type (2) in $w$, we must
restrict ourselves to one of the following subcases: (a) $q = 1$,
(b) $q = x_1q'$, or (c) $q = x_{n-1}q'$, where $q' \in F$.
\begin{itemize}
    \item Subcase (a). If $q = 1$, then $w = x_ipx_nx_i$. Therefore, $i = n-1$ or
    $i=1$, otherwise we have a factor of the form (2) in $w$. The first case was excluded
    in the beginning of Case 2. So $w = x_1px_nx_1$. Thus $p \neq 1$,  as otherwise $w$ is of type $(5x_1')$.
    Also, observe that $|p|_{x_{n}} = 0$, since otherwise we would have a proper factor $x_np'x_n$
    of $w$ such that $|p'|_{x_{n}} = |p'|_{x_{l}} = 0$ and thus this factor would be of the form $(5x_n)$.
    This violates the minimality of $w$ as a minimal $S'$-reduced and $S$-reducible word
    with respect to the deg-lex order in $F$. This means that $x_1p$ satisfies the conditions
    of Observation~\ref{pe} and is of the form~\eqref{formp}, so that $w$ is of the form
    $(5x_1'')$.
     This  contradicts the fact that it is $S'$-reduced.
\item Subcase (b). $q = x_1q'$. Again, $qx_i$ is of the form~\eqref{formp} and as $i < n-1$ it follows,
using the same arguments as in the case of words of the form~\eqref{forman}, that $qx_i$ must be a single block and thus $w =
x_ipx_nx_1\cdots x_i$. Now, by an argument used in the subcase (b)
of Case 2 in the first part of the proof,  when we considered
words $w$ of type $(4x_i)$, we can assume that $|p|_{x_{j}} =0$
for $j=i-1,i-2, \ldots ,1, n$. So $ |p|_{x_{n}} = 0$ allows us to
apply Observation~\ref{pe} to prove that $p$ is of the form~\eqref{formp}. This yields a contradiction, as $w$ is again proved
to be of the form $(5x_i'')$.
\item Subcase (c). $q = x_{n-1}q'$. {Once again, $qx_i$ is of the form~\eqref{formp}.
As the first block of $qx_i$ begins with $x_{n-1}$ we can see, as
before, that this is in fact the only block of this word.
Otherwise another block of $qx_i$ would have to begin with
$x_{i_2}$, where $i_2 > i_1 = n-1$ and also $n >i_2$. This is
impossible. Thus $qx_i = x_{n-1}q'x_i$ is a a single decreasing
block of length greater than~$1$ which is impossible, as
$|u|_{x_{i+1}} = 0$.}
\end{itemize}
The subcases (a)-(c) have been proved to lead to a
contradiction. Therefore, also in the case when $t = n$ we can see
that no $w$ can be $S'$-reduced but $S$-reducible.
\end{itemize}

\noindent So, every $S'$-reduced word is $S$-reduced.  Thus, Lemma~\ref{sprim} is proved.
\end{proofxx}\\

\noindent  It now follows easily from Lemma~\ref{rq} that the
reduction system $S$ satisfies the diamond lemma, because the
reduction system $T$ satisfies this. And similarly,
Lemma~\ref{sprim} implies then that the reduction system $S'$
satisfies the diamond lemma. Consequently, we have proved the
following theorem. 

\begin{theorem}\label{fingr}
$G'=\{w-w':(w,w')\in S'\}\s k[F]$ forms a finite
Gr\"obner basis and  $G=\{w-w':(w,w')\in S\}\s k[F]$
forms a Gr\"obner basis of the
algebra $k[\HK_{C_{n}}]$. Consequently, all $S'$-reduced words
form a basis of $k[\HK_{C_{n}}]$.
\end{theorem}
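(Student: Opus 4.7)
The plan is to deduce Theorem~\ref{fingr} from the diamond lemma (Lemma~\ref{diamond}) applied first to $S$ and then to $S'$, leveraging the fact that the larger system $T$ already satisfies this lemma by Theorem~\ref{basis}, together with the two comparison results Lemma~\ref{rq} and Lemma~\ref{sprim}.

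First I would do the easy bookkeeping. Each reduction in $S$ is a special case of one of the families (i)--(iii) from Theorem~\ref{basis}, and each reduction in $S'$ already appears in $S$, so we have pointwise inclusions $S' \s S \s T$. In particular $w - v \in \ker(\pi)$ for every $(w, v) \in S$, and the associated ideals are nested: $I(S') \s I(S) \s I(T) = \ker(\pi)$. The same inclusions imply that every $T$-reduced word is $S$-reduced and every $S$-reduced word is $S'$-reduced; combining these trivial directions with the content of Lemma~\ref{rq} and Lemma~\ref{sprim}, the three classes of reduced words coincide and all equal the set of normal words of $k[\HK_{C_n}]$ with respect to the fixed deg-lex order.

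The core step is verifying the hypothesis of the diamond lemma for $S$. An ambiguity of $S$ is, in particular, an ambiguity of $T$, hence $T$-resolvable by Theorem~\ref{basis}: both sides reduce to a common $T$-reduced word $N$. I would then observe that reducing either side using only $S$-reductions produces an $S$-reduced word which, by Lemma~\ref{rq}, is $T$-reduced; since $T$-reduced forms are unique (Lemma~\ref{diamond} applied to $T$), that word must equal $N$, so the $S$-ambiguity is resolvable. Lemma~\ref{diamond} then yields $k[F] = I(S) \oplus R(S)$. As $R(S) = R(T)$ maps isomorphically onto $k[F]/\ker(\pi)$, the containment $I(S) \s \ker(\pi)$ becomes an equality, and $G$ is a Gr\"obner basis of $k[\HK_{C_n}]$.

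The identical argument, with Lemma~\ref{sprim} replacing Lemma~\ref{rq}, shows that $G'$ is a Gr\"obner basis of $k[\HK_{C_n}]$ and that the set of $S'$-reduced words is a $k$-basis of this algebra. Finiteness of $G'$ is then a separate inspection: pairs of types (1)--(3) are manifestly finite in number, while constraints (ii)--(iv) in the definition of $S'$ bound the multiplicity of each generator occurring in $u$, $v$, $z$ by one, so their lengths, and hence the admissible shapes, are bounded by a function of $n$. Conceptually the heavy lifting has already been packaged into Lemmas~\ref{rq} and~\ref{sprim}; what I would flag as the main subtlety is the uniqueness-of-normal-forms step that transfers resolvability of ambiguities from $T$ down to its subsystems, since this is precisely where the diamond lemma for $T$ is consumed.
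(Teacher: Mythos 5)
Your proposal is correct and follows the same route as the paper, which compresses the entire argument into a single sentence (transfer the diamond lemma from $T$ to $S$ via Lemma~\ref{rq}, then from $S$ to $S'$ via Lemma~\ref{sprim}); your version merely makes explicit the uniqueness-of-$T$-reduced-forms step, the identification $I(S')=I(S)=\ker(\pi)$, and the finiteness count for $S'$ that the paper leaves implicit. Note only that you (correctly) invoke Lemma~\ref{rq} in the direction ``$S$-reduced implies $T$-reduced'', which is what its proof actually establishes, even though its statement is worded in the opposite (trivial, since $S\s T$) direction.
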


\noindent As mentioned before, the fact that in the
particular case of a cycle graph, even a finite Gr\"obner basis
can be obtained, strengthens the assertion of Theorem~\ref{main}
in view of~\cite{iyu}.\\

\noindent We conclude with an example showing that the above
result cannot be extended to arbitrary Hecke-Kiselman algebras of
oriented graphs, even in the case of PI-algebras.

\begin{example}
Let $\Theta$ be the graph obtained by adjoining an outgoing arrow
to the cycle $C_3$:

\begin{center}
\begin{pspicture}(6,3.5)(0,0.5)

\psdot(1,1) \rput(0.9,0.7){b}
\psdot(2,1)  \rput(2.1,0.7){c}
\psdot(1.5,2) \rput(1.7,2.1){a}
\psdot(1.5,3) \rput(1.7,3.1){d}
\psline{->}(1.1,1)(1.9,1)
\psline{<-}(1,1.1)(1.4,1.9)
\psline{->}(1.95,1.1)(1.55,1.95)
\psline{<-}(1.47,2.9)(1.47,2.1)
\end{pspicture}

\end{center}
then $V(\Theta )=\{a,b,c,d\}$ and we consider the
deg-lex order on the free monoid $F=\langle a,b,c,d\rangle$
defined by $a<b<c<d$. Then the algebra $k[\HK_{\Theta}]$ does not
have a finite Gr\"obner basis and it is a PI-algebra of
Gelfand-Kirillov dimension $2$.
\end{example}
\begin{proof}
It is easy to see that the set $N_{\Theta}$ used in the proof of
Theorem~\ref{main} is the union of the following subsets of $F$:
\begin{itemize}
   \item[(i)] $N_{(i)}= a\langle b,d\rangle a \cup b \langle c,d\rangle b
    \cup c\langle a,d\rangle c \cup d\langle b,c\rangle d $,
 \item[(ii)] $N_{(ii)}=a\langle
  c\rangle a \cup b\langle a,d\rangle b \cup c\langle b,d\rangle c
  \cup d\langle a,b,c\rangle d$,
 \item[(iii)] $N_{(iii)}= d\langle d \rangle b \cup  d\langle d \rangle c$.
\end{itemize}
 In particular, all words $d(abc)^kd, k\geq 1$, are in
$N_{(ii)}$, but they do not have factors that are another words of $N_{\Theta}$. 
It follows that the algebra $k[\HK_{\Theta}]$ does
not have a finite Gr\"obner basis (with respect to the indicated
presentation and deg-lex order). By~\cite{mecel_okninski}, this is
a PI-algebra. Moreover,  $k[C_{3}]$ is of linear growth with
reduced words being factors of two infinite words $(abc)^{\infty}$
and $(acb)^{\infty}$ and reduced words in $F$ are in the set
$F'\cup F'dF'$, where $F'=\langle a,b,c\rangle \subseteq F$. So,
$\GK(k[\HK_{\Theta}]) \leq 2$. On the other hand, words of the
form $(abc)^kd(abc)^m$, $k,m\geq 1$, do not have factors in
$N_{\Theta}$, so they are reduced. Therefore $\GK(k[\HK_{\Theta}]) = 2$.
\end{proof}

\noindent {\bf Acknowledgment.} The second author was partially supported by
the National Science Centre grant 2016/23/B/ST1/01045 (Poland).

\bigskip

\begin{tabular}{lll}
Arkadiusz M\c{e}cel & \quad \quad \quad \quad \quad & Jan Okni\'nski \\
\texttt{a.mecel@mimuw.edu.pl} & & \texttt{okninski@mimuw.edu.pl} \\
 & & \\
Institute of Mathematics & & \\
University of Warsaw & & \\
Banacha 2 & & \\
02-097 Warsaw, Poland & &
\end{tabular}
\end{document}